\newcommand\la{\langle}
\newcommand\ra{\rangle}
\renewcommand\aa{{\mathfrak a}}
\newcommand\hh{{\mathfrak h}}
\newcommand\nn{{\mathfrak n}}
\newcommand\sso{{\mathfrak{so}}}
\newcommand\vv{{\mathfrak v}}
\newcommand\ww{{\mathfrak w}}
\newcommand\zz{{\mathfrak z}}
\newcommand\iso{{\mathfrak{iso}}}
\newcommand\NN{\mathbb N}
\newcommand\RR{\mathbb R}
\newcommand\ZZ{\mathbb Z}
\newcommand\ad{\operatorname{ad}}
\newcommand\Ad{\operatorname{Ad}}
\newcommand\Iso{\operatorname{Iso}}
\newcommand\grad{\operatorname{grad}}
\DeclareMathOperator{\En}{E}
\theoremstyle{plain}
\newtheorem{thm}{Theorem}[section]
\newtheorem{lem}[thm]{Lemma}
\newtheorem{prop}[thm]{Proposition}
\theoremstyle{definition}
\newtheorem{defn}[thm]{Definition}
\newtheorem{rem}[thm]{Remark}
\newtheorem{example}[thm]{Example}
\begin{document}

\title[first integrals on nilpotent Lie groups]
{First integrals on step-two and step-three nilpotent Lie groups}

\author{Gabriela P. Ovando}

\thanks{{\it (2000) Mathematics Subject Classification}: 70G65, 70H05, 70H06, 22E70, 22E25 }

\thanks{{\it Key words and phrases}:  }

\thanks{Partially supported by  SCyT (UNR)}

\address{CONICET- UNR, Departamento de Matem\'atica, ECEN - FCEIA, Pellegrini 250, 2000 Rosario, Santa Fe, Argentina.}

\

\email{gabriela@fceia.unr.edu.ar}


\begin{abstract} The goal of this paper is the study of algebraic relations on the Lie algebra of first integrals of the geodesic flow on  nilpotent Lie groups equipped with a left-invariant metric. It is proved  that the isometry algebra of the $k$-step nilpotent Lie group, $k=2,3$, gives rise to a isomorphic family of first integrals for the geodesic flow. Also invariant first integrals are analyzed and new involution conditions are shown. Finally it is proved that in low dimensions  complete families of first integrals can be constructed  with Killing vector fields and symmetric Killing 2-tensor fields. This holds for k-step nilpotent Lie algebras of dimension $m\leq 5$ and $k=2,3$. The situation in dimension six is also studied. 
\end{abstract}

\maketitle

 \noindent\section{Introduction}

The study of the geodesic flow is a classical topic in geometry, where Liouville integrability and involution conditions for first integrals are main questions. Indeed the existence of many examples is needed to verify any advance in the topic. In this sense compact manifolds with completely integrable geodesic flows seem to be complicated to be acquired in view of topological obstructions for the existence of analytic first integrals (see for instance \cite{Koz, Ta1, Ta2}). 

As in many other situations, Lie groups constitute a source for the construction of  several kind of examples. Different authors  studied dynamical aspects of geodesic flows on nil- and solv-manifolds (see \cite{BoT,Bu1}), manifolds which are locally homogeneous and  obtained as compact quotients of solvable or nilpotent Lie groups. Other relations between  geometrical and  dynamical aspects on solvable or nilpotent Lie groups (or their quotients), were treated for instance in  the references \cite{DM, DDM,Eb1,Eb2,GM,LW,Py,Sc}.

Indeed a first approach when dealing with spaces obtained via Lie group actions, is the study of the situation at the Lie group when equipped with a left-invariant metric. Examples of Hamiltonian systems constructed with algebraic data were given for instance in \cite{Ko2,Sy,Th} on semisimple or reductive Lie groups. However for nilpotent or solvable Lie groups, tools and  techniques are quite different (see for instance \cite{BT,KOR, Ov1}). A general goal in these cases is to reveal some algebraic relations which can be associated to the underlying geometry.  

In this work we study  algebraic relations on the Lie algebra of first integrals, when compared with the Lie algebra of the isometry group on  a Lie group $N$  which is  equipped with a left-invariant metric. We mainly concentrate on 2- and 3- step nilpotent Lie groups  and we focus on invariant first integrals and first integrals derived from Killing vector fields. Killing vector fields were generalized to symmetric Killing  tensors, which  define first integrals of the equation of motion. Results on symmetric Killing tensors can be found for instance in \cite{HMS,Se} and in particular for 2-step nilpotent Lie groups in \cite{BM}. 

The main theorems  in this work prove the statement below:

\begin{itemize}
\item Let $N$ denote a  $k$-step nilpotent Lie group, with $k=2$ or $k=3$, equipped with a left-invariant metric. Let $\iso(\nn)$ denote the Lie algebra of the isometry group. {\em  There is a monomorphism of Lie algebras from $\iso(\nn)$ to its image in $C^{\infty}(TN)$, sending $Y^* \to f_{Y^*}$ for any Killing vector field $Y^* \in \iso(\nn)$.} 

 Recall that the  Lie algebra $\iso(\nn)$  admits a decomposition as a direct sum of vector spaces $\iso(\nn)=\hh \oplus\nn$, where $\hh$ consists of skew symmetric derivations, while $\nn$ is the Lie algebra of the subgroup of isometries given by translations on the left $L_n$, $n\in N$.
 
\item The previous result is efficient for the construction of complete sets of functions in involution. This is proved in dimension five and in most cases of dimension six on k-step nilpotent Lie algebras, $k=2,3$. 

\end{itemize}

A particular section is devoted to invariant first integrals, that is, functions on $C^{\infty}(TN)$ which are invariant by the action of the Lie group $N$. One gets nice involution conditions: for instance an invariant function $g$ always Poisson commute with any function $f_{X^*}$ for any $X\in \nn$. Moreover these kind of first integrals  can be always induced to quotients of the form $\Gamma \backslash N$, for any cocompact lattice $\Gamma< N$.

 Among invariant functions, the simplest ones are  linear or quadratic polynomials in the coordinates. Precisely, quadratic polynomials are obtained from symmetric linear maps on the Lie algebra and they also can be seen as symmetric Killing 2-tensors.  We  give explicit conditions for a symmetric map $S:\nn \to \nn$ to induce a  first integral, and so, we extend to  3-step nilpotent Lie groups, results in \cite{BM} relative to symmetric Killing 2-tensors. But the goal here has no relation with the question of decomposition of the Killing tensors. It seems that until now, 3-step nilpotent Lie groups were little considered for the topic of the present paper, so that a main contribution here is to open the study to $k$-step nilpotent Lie groups for finding new examples of completely integrable geodesic flows.

The   next section consists of  preliminaries about  the symplectic structure and its corresponding Poisson bracket on the tangent space $TN$. The main reason for consider $TN$ instead of the cotangent space is to make easier  the reading of   relations between the geometry and the algebraic structure of the corresponding Lie group $N$, when equipped with a left-invariant metric. This will be seen along the text.


\section{The geodesic flow on Lie groups}

In this section we provide general notions for the study of the geodesic flow on Lie groups when equipped with a metric that is invariant by translations on the left.

 Let $(M, \la \,, \, \ra)$ denote a Riemannian manifold. Its cotangent bundle has a symplectic structure which, via the metric, can be transported to the tangent bundle $TM$. For a given differentiable function $f:TM \to \RR$, there exists a vector field $X_f$ called the Hamiltonian vector field for $f$,  implicitly defined with help of the symplectic structure $\Omega$:
$$df(v)=\Omega(X_f, v) \qquad \quad \mbox{ for all } v. $$
Making use of the metric one also has the gradient field of $f$, defined by
$$df(v)=\la \grad\,f, v\ra \qquad \quad \mbox{ for all } v. $$
We restrict the attention to Lie groups endowed with left-invariant metrics. 
Let $N$ denote a Lie group with Lie algebra $\nn$ and let $\la,\,,\,\ra$ denote a metric on  $\nn$ which  is induced to $N$ so that  translations on the left are isometries. This is called a left-invariant metric on $N$.  

The tangent bundle $TN$ is trivialized so that it is isomorphic to  $N \times \nn$. Here we identify 
$$v_p\in TN \longleftrightarrow (p, (dL_{p^{-1}})_p v_p),$$ where the last pair, is sometimes called the body coordinates for $v_p$ (see \cite{A-M}). The metric on $\nn$ is induced to $TN$ as a product metric,  in fact $T_{(p,Y)}TN\simeq \nn\times \nn$ for all $p\in N, Y\in \nn$. 

The symplectic structure on the tangent bundle $TN$ induces a Poisson bracket on the space of differentiable functions $C^{\infty}(TN)$: given  functions $f,g\in C^{\infty}(TN)$, its Poisson bracket follows
\begin{equation}\label{poisson2}\{f, g\}(p,Y)=\Omega(X_f, X_g)=\la U, V'\ra - \la U', V\ra - \la Y, [V, V']\ra, 
\end{equation}
where we denote by $\grad f(p,Y)=(U,V)$ and $\grad g(p,Y)=(U',V')$, the respective gradient fields of $f$ and $g$.  

Recall that the energy function $\En: TN \to \mathbb R$  given by 
$$\En(p, Y) = \frac{1}{2}\langle Y, Y\rangle,$$
 has a  Hamiltonian vector field $X_{\En}$ whose flow is  called the {\em geodesic flow}. It is known that if $c(t)$ is an integral curve of $X_{\En}$ then $\pi\circ c$ is a geodesic on $N$, where $\pi:TN \to N$ denotes the usual projection (see for instance \cite{A-M}). 

We say that a pair of functions $f,g\in C^{\infty}(TN)$ are {\em in involution or Poisson commute}   whenever $\{f, g\}\equiv 0$.

The definitions above imply that  $\{f,g\}=0$ if and only if $df(X_g)=0$  if and only if $X_g(f)=0$ (and also $X_f(g)=0$). That is $f$ is constant along integral curves of $X_g$ (analogously for $X_f$). 

In particular by fixing the energy function $\En$, any function $f$ which Poisson commutes with $\En$ is called a {\em first integral} of the geodesic flow. Note that for any left-invariant metric on $N$, the gradient has the expression $\grad \En(p,Y)=(0,Y)$ for all $p\in N, Y\in \nn$. Thus, the equality $\{f,\En\}=0$ is equivalent to 
\begin{equation}\label{first-int}
\la Y,U \ra = \la Y, [V, Y]\ra \qquad \quad \mbox{ for all } Y\in \nn,
\end{equation}
  where the gradient of $f\in C^{\infty}(TN)$ is denoted  by $\grad f(p,Y)=(U,V)$.

\begin{defn}  Let ($N,\la\,,\ra)$ be a Riemannian manifold of dimension $n$. The geodesic flow is \emph{completely integrable} (in the sense of Liouville) if there exist $n$ first integrals of the geodesic flow, $f_i: TM \to \RR$, such that they pairwise Poisson commute $\{f_i, f_j\} = 0$ for all $i, j$ and the gradients of $f_1, \ldots, f_n$ are linear independent on an open dense subset of $TN$.
\end{defn}

Indeed the first step in the study of integrability is to get first integrals. There are several methods to obtain them.  It is known that whenever $M$ is a Riemannian manifold and $X^*$ is a Killing vector  field on $M$,  the function $f_{X^*}: TM \to \mathbb R$ defined as 
$$f_{X^*}(v) = \langle X^*(\pi(v)), v\rangle$$
 is a first integral of the geodesic flow. Recall that the set of Killing vector fields builds a Lie algebra, when considered with the Lie bracket,  that is isomorphic to the Lie algebra of the isometry group of $M$. In the next paragraphs we focus on this situation for Lie groups when equipped with left-invariant metrics. 

\begin{example} \label{heis} The Heisenberg Lie group of dimension three can be presented as the set of real matrices of the form 
$$\left( \begin{matrix}
1 & x_1 & z \\
0 & 1 & y_1 \\
0 & 0 & 1
\end{matrix}
\right)
$$
which is a three dimensional Lie group when considered with the usual matrix multiplication. With usual coordinates in $\RR^3$ one has 
$$(x_1,y_1,z)(x_1',y_1',z')=(x_1+x_1', y_1+y_1', z+z'+x_1y_1'), \qquad \mbox{ for } x_1,x_1', x_2, x_2', z, z' \in \RR.
$$
One denotes this group by $H_3$. Its Lie algebra $\hh_3$ has a basis of left-invariant vector fields given by:
$$
X_1(x_1,y_1,z)  = \partial_{x_1} \quad
Y_1(x_1,y_1,z)  =  \partial_{y_1} + x_1 \partial_z \quad
Z(x_1,y_1,z)  =  \partial_z,
$$
where the next  notation is used: $\partial_{\nu}=\frac{\partial}{\partial \nu}$. This basis satisfies the non-trivial Lie bracket relation:
$[X_1,Y_1] = Z$.

 The following  is a linear independent set of right-invariant vector fields at every $(x,y,z)\in \RR^3$:
$$X_1^*(x_1,y_1,z)= X_1 + y_1 Z, \quad Y_1^*(x_1,y_1,z) = Y_1 - x_1 Z, \quad Z^*(x_1,y_1,z)=Z.
$$
Take the metric on $H_3$ making of $X_1,Y_1,Z$ a orthonormal basis. On $\RR^3$ this is given by
$$g_{(x_1,y_1,z)}= dx_1^2+ (1+x_1^2) dy_1^2 + dz^2 - x_1 dy_1 dz.$$
Set the product metric on $TH_3 \simeq H_3 \times \hh_3$.  Since right-invariant vector fields correspond to Killing vector fields,  one has the corresponding first integrals $f_{X_1^*}, f_{Y_1^*}, f_Z$. Moreover since $\aa:=span\{Y_1,Z\}$ is a abelian ideal of codimension one, the geodesic flow on $H_3$ is completely integrable,  actually for any left-invariant metric on $H_3$ (as a consequence of Theorem \ref{t2} below). 

The Heisenberg Lie group of dimension $2n+1$, $H_{2n+1}$, is constructed with underlying manifold $\RR^{2n+1}$ and by generalizing the previous multiplication

\smallskip

$(x_1, \hdots, x_n, y_1, \hdots, y_n, z) (x_1', \hdots, x_n', y_1', \hdots, y_n', z) = $

\qquad \qquad \qquad \qquad  $= (x_1+x_1', \hdots, x_n+x_n', y_1+y_1', \hdots, y_n+y_n', z+z'+\sum_{i=1}^n x_i y_i').$

\smallskip

One gets non-trivial Lie brackets $[X_i, Y_i]=Z$ for $i=1, \hdots n$.

The integrability of the geodesic flow on compact quotients $\Gamma \backslash H_{2n+1}$ for families of lattices $\Gamma < H_{2n+1}$, was studied in \cite{Bu1,KOR}. 
\end{example}

A  nilpotent Lie group $N$  is a Lie group whose Lie algebra is nilpotent: it is $k$-step whenever the  $k$ Lie brackets are zero: $[x_1,[x_2,[\hdots, x_{k+1}]]]=0$. Denote by   $\nn$ the corresponding Lie algebra, thus for  
\begin{itemize}
	\item 2-step nilpotent one has $[U,[V,W]]=0$ for all $U,V,W\in \nn$;
	\item 3 -step nilpotent one has $[X,[U,[V,W]]]=0$.
\end{itemize}

And the $k-1$ Lie brackets are not zero. For $j\geq 1$, denote by $C^j(\nn)=[\nn, C^{j-1}(\nn)]$ with $C^0(\nn)=\nn$, which is called the central descending series of $\nn$. Note that if $\nn$ is $k$-step nilpotent then $C^{k-1}(\nn)\subseteq \zz(\nn)$ where $\zz(\nn)$ is the center of $\nn$. The commutator of the Lie algebra is the ideal defined by the subspace spanned by $[\nn,\nn]$. 

\begin{example} \label{n2} In dimension four one finds a 3-step nilpotent Lie algebra denoted by $n_2$. It has a basis $e_1, e_2, e_3, e_4$ satisfying the Lie bracket relations $[e_1, e_2]=e_3, \quad [e_1, e_3]=e_4$. 
	
	In this case the commutator is spanned by $e_3$ and $e_4$ belonging $e_4$ to the center of $n_2$. 
	
	Notice that the commutator of any 3-step nilpotent Lie algebra is  abelian. 
	
\end{example}

The Backer-Campbell-Hausdorff  (see for instance \cite{Va}), formula gives  the next expressions for the product on $N$:
\begin{itemize}
	\item for 2-step nilpotent:  $\exp(W) \exp(U)= \exp(W + U +\frac12 [W,U])$;
	\item for 3 -step nilpotent: $\exp(W) \exp(U)= \exp(W + U +\frac12 [W,U] +\frac1{12} ([W,[W,U]] + [U,[U,W]]))$.
\end{itemize}

Let $N$ be a nilpotent connected and simply connected Lie group. Thus if $N$ has dimension $n$, it can be modeled on $\RR^n$. Moreover,  the exponential map is a diffeomorphism from $\nn$ to $N$,  that sends the element $V=\sum x_i X_i$ $\mapsto (x_1, x_2, \hdots, x_n)$, in  the usual coordinates in $\RR^n$, where $\{X_i\}$ is a basis of 
(left-invariant) vectors in $\nn$. 

\section{the isometry algebra  and first integrals} 
The goal of this section is  the study of first integrals associated to the isometry group of a $k$-step nilpotent Lie group $N$, for $k=2,3$. We get explicit formulas and prove one of the main results of the paper, which will be applied later.   

Provide $N$ with a Riemannian metric $\la\,,\,\ra$ which is invariant by translations on the left. Via the identification $\exp(X)=X$,  
the group of orthogonal automorphisms of $N$  is in correspondence to the group $H$ consisting of linear maps $\psi:\nn \to \nn$ satisfying:
\begin{enumerate}[(i)]
	\item $\la \psi(X), \psi(Y) \ra = \la X, Y \ra$ for all $X, Y \in \nn$;
	\item $ \psi [X, Y] = [\psi(X), \psi(Y)]$ for all $X, Y \in \nn$.
\end{enumerate}

Moreover, in this situation that the metric on $N$ is  left-invariant, the isometry group is the semidirect product
$$\Iso(N)=H \ltimes N, \qquad\mbox{ where }{\begin{array}{ll}
	N &\mbox{is the nilradical of } \Iso(N) \mbox{ and } \\
	H  & \mbox{is the  group of orthogonal automorphisms,}
	\end{array}}$$
result that was proved in 1963 by Wolf  \cite{Wo} (see also \cite{Wi}). The action of $H$ on $N$ is given by $f \cdot L_n = L_{f(n)}$ for every left translation $L_n$, for $n\in N$, and every $f\in H$.

The corresponding Lie algebra, denoted by $\iso(\nn)$, is the direct sum as vector spaces $\iso(\nn)=\hh \ltimes \nn$, where $\hh$ is the Lie  subalgebra of $H$ and $\nn$ an ideal being the Lie algebra of $N$. The  Lie subalgebra $\hh$  consists of skew-symmetric derivations,
$$\hh=\{ D : \nn \to \nn \, : \, D[X, Y]=[DX, Y] + [X, DY] \mbox{ and } \la DX, Y\ra + \la X, DY\ra =0\}.$$

Also any vector  $X\in T_eN$ gives rise to a right-invariant vector field  on $N$ by
$$X^*(p) = \frac{d}{ds}_{|_{s=0}} \exp(sX) p \qquad \mbox{ for all }p\in N,  $$ 
Clearly $\exp(sX)p$ is a one-parameter group of isometries corresponding to the translation on the left by $\exp(sX)$,  $L_{\exp(sX)}$. 
Notice that since $\exp(sX) . p = p . p^{-1} \exp(s X)p$ one has
$X^*(p) = \frac{d}{ds}|_{s=0} \exp(sX) p = dL_p \Ad(p^{-1}) X$, which is the Killing vector field associated to $X\in   \nn$. By making use of this,   the differentiable function $f_{X^*}:TN \to \RR$ is given by
\begin{equation}\label{killingf}
f_{X^*}(p,Y)  =  \la \Ad(p^{-1}) X, Y\ra.  
\end{equation}
The computation of  the derivative
$$\frac{d}{ds}_{|_{s=0}}f_{X^*}(p\exp sU), Y + s V)$$
brings the  following expression for the gradient vector field
\begin{equation}\label{gradleft}
\grad f_{X^*}(p,Y)= (\ad^{\tau}(\Ad(p^{-1})X)(Y), \Ad(p^{-1})X),
\end{equation}
where $\ad^{\tau}(X)$ denotes the transpose of $\ad(X)$ relative to the metric on $\nn$,  $\la\,,\,\ra$. In view of this one verifies Equation \eqref{first-int} 
so that any $f_{X^*}$ is a first integrals of the geodesic flow.

Assume that $X_1, X_2\in\nn$ so that  $X_1^*$ and $X_2^*$ are the corresponding right-invariant vector fields  with respective functions $f_{X_1^*}, f_{X_2^*}: TN\to \RR$ as above.  The Poisson bracket follows
\begin{equation}\label{rbracket}
\begin{array}{rcl}
\{f_{X_1^*}, f_{X_2^*}\}(p,Y) & = & \la \ad^{\tau}(\Ad(p^{-1})X_1)(Y), \Ad(p^{-1})X_2 \ra \\
& & \qquad  - \la \ad^{\tau}(\Ad(p^{-1})X_2)(Y), \Ad(p^{-1})X_1 \ra \\
& & \qquad  +  \la Y, [\Ad(p^{-1})X_2,  \Ad(p^{-1})X_1] \ra \\ 
& = & \la Y, [\Ad(p^{-1})X_1, \Ad(p^{-1})X_2]\ra - \la Y, [\Ad(p^{-1})X_2, \Ad(p^{-1})X_1]\ra + \\
& & + \la Y, [\Ad(p^{-1})X_2, \Ad(p^{-1})X_1]\ra \\
& = & \la Y, [\Ad(p^{-1})X_1, \Ad(p^{-1})X_2]\ra \\
& = & \la Y, \Ad(p^{-1})[X_1,X_2] \ra \\
& = & f_{[X_1, X_2]^*}(p, Y).
\end{array}
\end{equation}

Note that $f_{X_i^*}(p,Y)=f_{X_j^*}(p,Y)$ for all $(p,Y) \in TN$ if and only if $\la \Ad(p^{-1})X_i, Y\ra=\la \Ad(p^{-1})X_j, Y\ra$ for all $(p,Y)$, and this easily implies  $X_i=X_j$. 

 In the next paragraphs we complete the study of  Killing vector fields corresponding to isometries fixing the identity element for a $k$-step nilpotent Lie group $N$, k=2,3.

A Killing vector field on $N$ for the skew-symmetric derivation $D\in \hh$ at $p=\exp W$,  is given by 
$$D^*(p)=\frac{d}{dt} |_{0} \exp(e^{tD}W) = d (\exp)_W(DW).$$
In fact note that $e^{tD}$ is an orthogonal automorphism of $(\nn, \la\,,\,\ra)$, which  induces a curve of orthogonal automorphisms of $N$, $f_t$,  satisfying the condition  $(d f_t)_e= e^{tD}$.  Also for any automorphism $f:N \to N$, it holds $f(\exp W)=\exp( df_e W)$, that implies the formula above. Finally, the formula for the differential of the exponential map (see for instance Ch. II in \cite{He}) follows:  

$ d \exp_W =  d (L_{\exp W})_e \circ \frac{1 - e^{-\ad(W)}}{\ad(W)} )=  d (L_{\exp W})_e \sum_{i=0} \frac{\ad(-W)^{i}}{(i+1)!}\qquad W\in \nn, $

where $e^A$ denotes  the usual exponential map of linear transformation of $\RR^k$. 
In the case of  a k-step nilpotent Lie group this expression translates into a finite polynomial since $\ad(W)^k=0$ for all $W\in \nn$. Fixing a skew-symmetric derivation on $\nn$, $D\in \hh$,    the  first integral $f_{D^*}:TN \to \RR$ is obtained by the formula:
$$f_{D^*}(\exp(W), Y) = \la D^*(\exp(W)),Y\ra = \la \sum_{i=0}^k \frac{\ad(-W)^{i}}{(i+1)!}(DW), Y\ra.$$
 Explicitly, for the cases 
\begin{enumerate}[(i)]
\item  2-step nilpotent: $f_{D^*}(\exp(W), Y) = \la DW -\frac12 [W, DW], Y\ra$, 
\item  3-step nilpotent: $f_{D^*}(\exp(W),Y) = \la DW -\frac12 [W, DW] + \frac16 [W,[W,DW]], Y\ra$.
\end{enumerate}
One calculates the gradient vector field on $TN$ of the map  $f_{D^*}$, denoted $\grad f_{D^*}(p,Y)=(U,V)$ by
$$U=\frac{d}{dt}|_{0} f_{D^*}(\exp W \exp(sU), Y), \qquad V= d(\exp)_W(DW)=\sum_{i=0} \frac{\ad(-W)^{i}}{(i+1)!}(DW), $$
and one gets that $V=D^*(\exp W)$, while this is completed with:
\begin{enumerate}[(i)]
\item for 2-step: $\grad f_{D^*}(\exp(W), Y) = (-DY + \ad^{\tau}(DW)(Y), DW -\frac12 [W, DW])$, 
\item for 3-step: $\grad f_{D^*}(\exp(W),Y) = (U, V)$ with
 
\qquad $\begin{array}{rcl}
U & = & -DY + \ad^{\tau}(DW)(Y)+ \frac12 \ad^{\tau}([DW,W])(Y) \mbox{and }\\
V & = & DW -\frac12 [W, DW] + \frac16 [W,[W,DW]],
\end{array}
$
\end{enumerate}
where $\ad^{\tau}(X)$ denotes the transpose of $\ad(X)$ with respect to the inner product on $\nn$. 

One proves the next result.

\begin{thm}\label{t2} Let $(N, \la\,,\,\ra)$ denote a $k$-step nilpotent Lie group equipped with a left-invariant metric, and $k=2,3$. The map
$$ \Psi:(\iso(\nn), [\,,\,]) \quad \mapsto \quad  (C^{\infty}(TN),\{\,,\,\}),\, \, \Psi: D + X  \mapsto f_{D^*} + f_{X^*}$$
is a monomorphism from $\iso(\nn)$ on $C^{\infty}(TN)$. 

\end{thm}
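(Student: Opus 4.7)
The plan is to verify two things: $\Psi$ is an injective linear map, and $\Psi$ preserves Lie brackets. Linearity is immediate from the definition. Since $\iso(\nn)=\hh\oplus\nn$ carries the semidirect bracket
\[
[D_1 + X_1, D_2 + X_2] = [D_1, D_2] + D_1 X_2 - D_2 X_1 + [X_1, X_2],
\]
the bracket check reduces by bilinearity to three identities:
\[
\text{(a)}\ \{f_{X_1^*}, f_{X_2^*}\} = f_{[X_1, X_2]^*}, \quad \text{(b)}\ \{f_{D^*}, f_{X^*}\} = f_{(DX)^*}, \quad \text{(c)}\ \{f_{D_1^*}, f_{D_2^*}\} = f_{[D_1, D_2]^*}.
\]
Identity (a) has already been established in \eqref{rbracket}, so only (b) and (c) remain.

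For injectivity, suppose $f_{D^*} + f_{X^*} \equiv 0$ on $TN$. Evaluating at $p=e$ (i.e.\ $W=0$) collapses the Killing-field formula $D^*(\exp W) = \sum_{i\ge 0} \frac{\ad(-W)^i}{(i+1)!}(DW)$ to $0$, so the equation reduces to $\la X, Y \ra = 0$ for all $Y \in \nn$, forcing $X=0$. Then $f_{D^*}\equiv 0$ says the Killing vector field $D^*$ vanishes identically, but $D^*$ is the infinitesimal generator of the one-parameter group of orthogonal automorphisms $\exp(tD)$; vanishing of $D^*$ forces $\exp(tD) = \id$ for all $t$, hence $D=0$.

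For the bracket identities (b) and (c), the strategy is direct substitution of the explicit gradient formulas (cases (i) and (ii) in the 2-step and 3-step displays for $\grad f_{D^*}$, together with \eqref{gradleft}) into the Poisson bracket formula \eqref{poisson2}. The simplification relies on three structural inputs: the derivation identity $D[W,V] = [DW,V] + [W,DV]$, the skew-symmetry $\la DU, V\ra + \la U, DV\ra = 0$, and the nilpotency $\ad(W)^2 = 0$ (respectively $\ad(W)^3 = 0$) together with the fact that $[\nn, \nn]$ lies in the center (respectively is abelian) in the 2-step (respectively 3-step) case. In the 2-step case, for example, the three summands $\la U_D, V_X\ra$, $-\la U_X, V_D\ra$ and $-\la Y, [V_D, V_X]\ra$ each produce terms built from $DX$, $[W,DX]$ and $[DW,[W,X]]$; the last class is zero by centrality of $[\nn,\nn]$, and the remainder condenses to $\la Y, DX - [W, DX]\ra = \la Y, \Ad(p^{-1})(DX)\ra = f_{(DX)^*}(p, Y)$. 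Identity (c) follows the same pattern.

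The main obstacle is bookkeeping in the 3-step case: the gradient formulas already contain quadratic terms in $\ad(W)$, so the Poisson bracket produces expressions of order up to four in $\ad(W)$ whose cancellation requires iterated use of the derivation and skew-symmetry identities together with $\ad(W)^3 = 0$ and commutativity of $[\nn,\nn]$. Conceptually, the full statement can also be derived from the general fact that the action of $\Iso(N)$ on $N$ lifts to a Hamiltonian action on $TN$ whose comomentum map $Y \mapsto f_Y$ is automatically a Lie algebra homomorphism; this viewpoint yields (a)--(c) simultaneously and pins down the sign convention already exhibited in \eqref{rbracket}.
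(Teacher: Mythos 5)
Your overall architecture coincides with the paper's: reduce the homomorphism property to the three identities (a)--(c) via the semidirect-product bracket, cite \eqref{rbracket} for (a), verify (b) and (c) by substituting the explicit gradient formulas into \eqref{poisson2}, and obtain injectivity by first evaluating at $p=e$ to kill $X$ and then arguing that $f_{D^*}\equiv 0$ forces $D=0$. Your injectivity step takes a mildly different final turn --- you invoke the flow of the Killing field $D^*$ to conclude $e^{tD}=\id$, whereas the paper argues pointwise that $D^*(\exp W)=d\exp_W(DW)$ and that $d\exp_W$ is nonsingular, hence $DW=0$ for all $W$; both are valid.

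The one substantive shortfall is that identities (b) and (c) in the $3$-step case are exactly where the paper's proof does its work, and your proposal only describes the computation rather than performing it. The $2$-step sketch you give for (b) is essentially right (the terms $[DW,[W,X]]$ do not simply vanish by centrality of $[\nn,\nn]$ alone; in the paper's $3$-step computation their cancellation requires the Jacobi identity applied to $\frac12[W,[DW,X]]-\frac12[DW,[W,X]]-\frac12[X,[DW,W]]=0$, and the analogous cancellations in (c) are not automatic), so ``the same pattern'' and ``bookkeeping'' are doing real load-bearing here. As written, the proposal is a correct outline rather than a complete proof. Your closing remark about the momentum map is worth developing: the functions $f_{Y^*}$ are precisely the components of the momentum map of the cotangent lift of the isometry action, and the comomentum map is automatically a Lie algebra homomorphism up to sign conventions; this route would prove (a)--(c) simultaneously, for any Lie group and any invariant metric, and would in fact settle the conjecture the paper states for general $k$-step nilpotent groups --- but you would then need to pin down the sign conventions carefully against \eqref{poisson2}, and you do not carry this out either.
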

\begin{proof} We first prove that $\Psi$ is a Lie algebra homomorphism. Let $D_1, D_2$ denote skew-symmetric derivations on $\nn$, and let $X_1, X_2 \in \nn.$ The Lie bracket on $\iso(\nn)$ is given by 

$$[(D_1,X_1), (D_2,X_2)]=([D_1,D_2], D_1X_2-D_2X_1 + [X_1,X_2]), \, \mbox{for } D_i\in \hh, \, X_i\in \nn,\, i=1,2.$$ 

We shall show the proofs for the 3-step nilpotent case (for $k=2$ the computations are easier). 
The Poisson bracket of the functions $f_{D_1^*}, f_{D_2^*}$ gives $\{f_{D_1^*}$, $f_{D_2^*}\}=f_{[D_1,D_2]^*}$. In fact  by  making use of the gradient vector fields  above, for  $k=3$, one has
$$ 
\begin{array}{rcl}
\{f_{D_1^*}, f_{D_2^*}\} & = & \la -D_1 Y + \ad^{\tau}(D_1W)(Y) +\frac12 \ad^{\tau}([D_1W,W])(Y), D_2 W +\frac12 [D_2 W,W] + \\
 & & + \frac16 [W,[W,D_2W]] \ra  + \\ 
& & - \la -D_2 Y + \ad^{\tau}(D_2W)(Y) + 
 \frac12 \ad^{\tau}([D_2W,W])(Y), D_1 W +\frac12 [D_1 W,W] + \\
& & + \frac16 [W,[W,D_1W]] \ra - \la Y, [D_1 W, +\frac12 [D_1W,W], D_2 W, +\frac12 [D_2W,W]]\ra \\
& = & \la Y, [D_1,D_2](W] +\frac12[[D_1,D_2](W),W] +\frac16 [W,[W, [D_1,D_2](W)]]\ra \\
& = & f_{[D_1,D_2]^*}(p,Y).
\end{array}
$$

Let $D\in \hh, X\in\nn$. In the case $k=3$,  for $p=\exp(W)$ one has
$$
\begin{array}{rcl}
\{f_D, f_{X^*}\} (\exp(W),Y) & = & \la -DY + \ad^{\tau}(DW)(Y)+\frac12\ad^{\tau}([DW,W])(Y), \Ad(p^{-1})X\ra \\
 & & - \la \ad^{\tau}(\Ad(p^{-1})X)(Y), DW + \frac12 [DW,W]+\frac16 [W,[W,DW]]\ra \\
& & + \la Y, [\Ad(p^{-1})X, DW + \frac12[DW,W]]\ra\\
& = & \la Y, D \Ad(p^{-1})X + [DW, \Ad(p^{-1})X] + \frac12 [[DW,W],\Ad(p^{-1})X]\ra \\
& & - \la Y, [\Ad(p^{-1})X,DW] +\frac12[\Ad(p^{-1})X, [DW,W]]\ra + \\
& & + \la Y,[\Ad(p^{-1})X, DW] +\frac12 [\Ad(p^{-1})X, [DW,W]]\\
& = & \la Y, D\Ad(p^{-1})X +[DW, \Ad(p^{-1})X]+\frac12[[DW,W],\Ad(p^{-1})X]\ra.
\end{array}
$$
By using that $\Ad(p^{-1})X= X - [W,X] + \frac12 [W,[W,X]]$, one has

$
D\Ad(p^{-1})X +[DW, \Ad(p^{-1})X]+\frac12[[DW,W],\Ad(p^{-1})X]  \, = $

\qquad  $ = DX - [DW,X] - [W,DX] + \frac12 [DW,[W,X]]  + \frac12 [W,[DW,X]] + \frac12 [W,[W,DX]] + $

\qquad \quad  $+ [DW, X - [W,X]] + \frac12 [[DW,W], X]$ 

\qquad $ = DX - [W,DX] + \frac12 [DW,[W,X]]  + \frac12 [W,[DW,X]] + \frac12 [W,[W,DX]] - [DW, [W,X]] + $

\qquad \quad  $ + \frac12 [[DW,W], X]$

\qquad $ = DX - [W,DX]  + \frac12 [W,[W,DX]] = \Ad(\exp(-W))DX$. 

Since 
$ \frac12 [W,[DW,X]] - \frac12 [DW,[W,X]] - \frac12 [X,[DW,W]] =0$ by the Jacobi identity, one proved the next equality 
$$\{f_{D^*},f_{X^*}\} = f_{(DX)^*}.$$ 
And for $X_1, X_2\in \nn$ it was already  proved above that 
$f_{[X_1,X_2]^*}  \equiv \{f_{X_1^*}, f_{X_2^*}\}$,  which finally proves that $\Psi$ is a Lie algebra homomorphism. 

To prove that $\Psi$ is injective assume now that $f_{D^*}+ f_{X^*}=0$. 

 In particular for $p=e=\exp(0)$ one gets
$0=f_{D^*} (e,Y) + f_{X^*}(e,Y) =\la X, Y\ra=0$ for all $Y\in \nn$, which gives $X=0$. 

So $f_{D^*}\equiv 0$  says that $\la D^*(p), Y\ra =0$ for all $Y\in \nn$. This gives $D^*(p)=0$. From the formulas above one has that
$D^*(p)= d\exp_W (DW)=0$ and since the exponential map is a diffeomorphism on a nilpotent Lie group, its differential is non singular at every point, and this implies  $DW=0$ for all $W\in \nn$, that is $D\equiv 0$.

\end{proof}

From the theorem above it is clear that  abelian subalgebras in the isometry Lie algebra is an important source of first integrals, which could play a role in the question of integrability. Moreover if the dimension of this abelian Lie algebra is maximal. 

\smallskip

\begin{rem} 
In \cite{BT} the authors prove a similar result for pseudo H-type Lie groups, which is a family of 2-step nilpotent Lie groups. They show explicit formulas, using classical tools for studying the geometry of  2-step nilpotent Lie groups. 	
	\end{rem}

 \begin{rem} Assume $(\nn, \la\,,\, \ra)$ is a Lie algebra equipped with a metric, whose  commutator is denoted by $C(\nn)$ and its center  by $\zz$. It is easy to verify that whenever $D:\nn \to  \nn$ is a skew-symmetric derivation, both the commutator and the center are invariant by $D$. Moreover if $\nn =  \vv \oplus \zz$ is a orthogonal decomposition, then $D\vv\subset \vv$.
 Analogously if one has  $\nn =  \ww \oplus  C(\nn)$, a direct sum as vector spaces for $\ww=C(\nn)^{\perp}$. 
\end{rem}

\begin{example} \label{n1} Let $n_1$ denote the 3-step nilpotent Lie algebra of dimension five, with orthonormal basis $e_1, e_2, e_3, e_4, e_5$ and nontrivial Lie brackets:
$$[e_1, e_2]=e_3, \quad [e_1, e_3]=e_5, \quad [e_2, e_4]=e_5.$$
Usual computations show that a  skew-symmetric derivation $D:n_1 \to n_1$ has the form
$$D e_1 = \alpha e_2, \quad De_2 = -\alpha e_1 + \beta e_4, \quad De_4= -\beta e_2 \quad \mbox{ for } \alpha, \beta\in \RR.
$$
\end{example}

\begin{rem}
	
	We conjecture that  Theorem \ref{t2} may still be true for any $k$-step nilpotent Lie group, with $k\geq 2$: there exits a  monomorphism from the isometry Lie algebra to the Lie algebra of first integrals. However the proof may need other tools or techniques.
	
\end{rem}

\section{Invariant First integrals and involution conditions}

In this section we study deeper some involution conditions. Firstly we put attention to invariant first integrals. Next we extend the study to the involution conditions with the first integrals of the previous section. But some results here are true for any Lie group equipped with a left-invariant metric. We need  the nilpotency property  when we play with a first integrals $f_{D^*}$, for a skew-symmetric derivation $D$.

Indeed the Lie group $N$ acts on $TN$ by translations on the left $g \cdot (p,Y) =( gp, Y)$. 
  We say that a function $f:TN \to \RR$ is {\em invariant} if $f(p,Y)=f(e,Y)$ for all $p\in N, Y\in \nn$.

\smallskip

\begin{example}
The energy function on $TN$  is invariant whenever the metric is left-invariant.
\end{example}

 Concerning invariant functions in $C^{\infty}(TN)$ for a Lie group $N$,  we can see the following facts.
\begin{enumerate}[(i)]
\item The gradient  vector field and the Hamiltonian vector  field of an invariant function $f:TN \to  \RR$ are respectively given by
$$\grad \,f(p,Y)= (0,V) \quad X_f(p,Y)=(V, \ad^{\tau}(V)Y)\, \mbox{ where }  V\in \nn, $$
 satisfies $\la V', V\ra = \frac{d}{ds}|_{s=0} f(e, Y +sV')$ for all $V'\in \nn$, and  $\ad^{\tau}(V)$ denotes the transpose of $\ad(V)$ relative to the metric on $\nn$.

The existence of an invariant function $f:TN \to \RR$ is equivalent to the existence of a function on the Lie algebra $\nn$, that is, $F: \nn \to \RR$. 
In fact, given an invariant function $f:TN \to \RR$ define $F: \nn \to \RR$ as
$F(Y)=f(e,Y)$
and conversely given $F:\nn\to \RR$ define an invariant function $f:TN  \to \RR$ by
$$f(p,Y)=F(Y)\qquad \quad \mbox{ for all } p\in N, Y\in  \nn.$$
\item  Let $f_1, f_2: TN \to \RR$ be invariant functions with corresponding gradient vector fields $\grad f_i(p,Y)=(0,V_{f_i})$ for $i=1,2$. Then their Poisson bracket is
\begin{equation}\label{bracket-invariants}
\{f_1,f_2\}(p,Y)= - \la Y, [V_{f_1}, V_{f_2}]\ra.
\end{equation}

Notice that $V_{f_i}=\grad_{\nn}F_i$. In fact let $F_i:\nn \to \nn$ be a smooth function and let $\la\,,\,\ra$ denote the inner product on $\nn$. Then for $U,V\in \nn$ one has 

$\la\grad_{\nn} F_i(Y), V\ra=d{F_i}_Y(V)=d{f_i}_{(p,Y)}(U,V)= d{f_i}_{(p,Y)}(0,V)=\la \grad f_i (p,Y),(0,V)\ra$. 
\end{enumerate}

\begin{lem} \label{lema2} Let $N$ denote a Lie group equipped with a left-invariant metric and let $X^*$ denote a right-invariant vector field $X^*$ with corresponding first integral $f_{X^*}:TN \to \RR$. Then any invariant function  $g:TN \to \RR$ Poisson commutes with any first integral $f_{X^*}$:
$$\{f_{X^*}, g\}\equiv 0 \qquad \mbox{ for all } X\in \nn.$$
\end{lem}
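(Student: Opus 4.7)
The plan is to compute $\{f_{X^*}, g\}$ directly from the explicit Poisson bracket formula \eqref{poisson2}, feeding in the known gradients. No nilpotency is used (consistent with the lemma being stated for any Lie group with a left-invariant metric); the cancellation comes purely from the defining property of the transpose $\ad^{\tau}$.

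First I would write down the two gradients. By formula \eqref{gradleft},
\[
\grad f_{X^*}(p,Y) = \bigl(\ad^{\tau}(\Ad(p^{-1})X)(Y),\, \Ad(p^{-1})X\bigr),
\]
so with the notation of \eqref{poisson2} we have $U = \ad^{\tau}(\Ad(p^{-1})X)(Y)$ and $V = \Ad(p^{-1})X$. By the first observation preceding the lemma, since $g$ is invariant,
\[
\grad g(p,Y) = (0, V')
\]
for some $V' = V_g(Y) \in \nn$ independent of $p$, so $U' = 0$.

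Now I substitute into \eqref{poisson2}:
\[
\{f_{X^*}, g\}(p,Y) = \langle U, V'\rangle - \langle U', V\rangle - \langle Y, [V, V']\rangle = \langle \ad^{\tau}(\Ad(p^{-1})X)(Y), V'\rangle - \langle Y, [\Ad(p^{-1})X, V']\rangle.
\]
By the defining property of the metric transpose, $\langle \ad^{\tau}(Z)(Y), V'\rangle = \langle Y, \ad(Z)V'\rangle = \langle Y, [Z, V']\rangle$ for every $Z \in \nn$. Applied with $Z = \Ad(p^{-1})X$, this shows the two remaining terms coincide and cancel, giving $\{f_{X^*}, g\}(p,Y) = 0$.

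There is really no obstacle here beyond bookkeeping: the content of the lemma is that the ``mixed'' term $\langle U,V'\rangle$ in the Poisson bracket is exactly the term that kills the $\la Y, [V,V']\ra$ contribution when $U'=0$ and $U$ has the special form coming from a right-invariant Killing vector field. The only thing worth flagging is that this argument does \emph{not} need $N$ to be nilpotent, which explains the lemma's slightly broader formulation; nilpotency will only enter later when one wants analogous involution identities for the derivation-type first integrals $f_{D^*}$.
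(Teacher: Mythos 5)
Your computation is correct and is essentially identical to the paper's own proof: both substitute $\grad f_{X^*}(p,Y)=(\ad^{\tau}(\Ad(p^{-1})X)(Y),\Ad(p^{-1})X)$ and $\grad g(p,Y)=(0,V')$ into formula \eqref{poisson2} and observe that the two surviving terms cancel by the defining property of $\ad^{\tau}$. Your remark that nilpotency is not needed here also matches the paper's framing of the lemma for general Lie groups with left-invariant metrics.
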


\begin{proof} The proof follows from the information we have. The gradient of $f_{X^*}$ is given by the vector $(\ad^{\tau}(\Ad(p^{-1})X)(Y), \Ad(p^{-1})X)$ and the gradient field  of $g$ has the form $(0,V)$. The formula for the Poisson bracket gives

\smallskip
	
	$\{f_{X^*}, g\}(p,Y)= \la \ad^{\tau}(\Ad(p^{-1})X) Y, V\ra -  \la Y, [Ad(p^{-1})(X), V]\ra=0$, 
	
	\smallskip
	
	which  proves the result. 
\end{proof}
\begin{example} Some interesting functions could be quadratic polynomials on $\nn$ which are obtained as $g(Y)=\la SY,Y\ra$, where $S$ is a symmetric map. Any symmetric map  which gives rise to a invariant first integral  may satisfy the equation  (*) $\la Y, [SY,Y]\ra=0$ for all $Y\in \nn$ -see the next proposition. Any symmetric map $S:\nn \to \nn$ defines  an invariant symmetric Killing 2-tensor in \cite{BM}. 
	
	On the Lie algebra $n_1$ of Example \ref{n1}, any such symmetric map  satisfying (*) may be,  
$$Se_i= a e_i, \mbox{ for } i=1,2,3,4, \quad Se_5 = be_5, \quad 
\mbox{ where } a,b\in \RR.
$$
Note that the identity map corresponds to the symmetric map for  the energy function. So it would be  interesting to find other non trivial examples.
\end{example}
The next proposition specifies conditions for invariant  linear or quadratic polynomials to become first integrals.
\begin{prop}\label{propcuad}
	Let  $(N, \la\,,\,\ra)$ be a Lie group endowed with a left-invariant metric and let $TN$ denote its tangent bundle with the product metric.
	\begin{enumerate}[(i)]
		\item Let $f_{X}: TN \to \mathbb R$ be defined by 
		$$f_{X}(p, Y) = \langle Y, X\rangle.$$
		Then $f_{X}$ is a first integral of the geodesic flow if and only if $\ad(X)$ is skew-symmetric. 
		In particular, for a nilpotent Lie algebra $\nn$, the function  $f_X$ is a first integral if and only if $X\in \zz$. Moreover the family  $\{f_{Z_i}:Z_i\in \zz\}$ is a commutative family of first integrals.
		\item Let $S : \mathfrak n \to \mathfrak n$ be a symmetric endomorphism  and let $g_S:TN \to \RR$ be given by 
		$$
		g_S(p, Y) = \frac{1}{2}\langle Y, SY\rangle.
		$$
		Then $g_S$ is a  first integral of the geodesic flow if and only if
	\begin{equation}\label{eqcuad}
		0 = \langle Y, [SY, Y]\rangle.
	\end{equation}
		
	\end{enumerate}
\end{prop}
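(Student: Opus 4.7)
My plan is to apply directly the first integral criterion \eqref{first-int} together with the formulas for gradients of invariant functions recorded in item (i) of the discussion preceding Lemma \ref{lema2}. Both $f_X$ and $g_S$ depend only on the $Y$-coordinate, so they are invariant in the sense of that section, and consequently $\grad f(p,Y)=(0,V)$ for some $V\in\nn$ to be determined in each case. Substituting into \eqref{first-int} reduces the first integral property to a purely algebraic condition on $\nn$.

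For (i), a direct differentiation of $f_X(p,Y)=\la Y,X\ra$ in the fiber direction yields $V=X$, so that \eqref{first-int} becomes $0=\la Y,[X,Y]\ra$ for every $Y\in\nn$, which is exactly the condition that $\ad(X)$ is skew-symmetric with respect to $\la\,,\,\ra$. For the nilpotent refinement, I would invoke the standard fact that $\ad(X)$ is nilpotent for every $X\in\nn$; a linear map that is simultaneously skew-symmetric (hence semisimple over $\RR$) and nilpotent must vanish, so $\ad(X)=0$, i.e.\ $X\in\zz$. The involution assertion for $\{f_{Z_i}\}_{Z_i\in\zz}$ then follows at once from formula \eqref{bracket-invariants}, since $V_{f_{Z_i}}=Z_i$ gives $\{f_{Z_i},f_{Z_j}\}(p,Y)=-\la Y,[Z_i,Z_j]\ra=0$.

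For (ii), differentiating $g_S(p,Y)=\tfrac12\la Y,SY\ra$ and using the symmetry of $S$ identifies the $\nn$-component of $\grad g_S$ as $V=SY$. Again $U=0$ by invariance, so \eqref{first-int} collapses to
\[
0=\la Y,[SY,Y]\ra\quad\text{for every }Y\in\nn,
\]
which is exactly \eqref{eqcuad}.

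There is no real obstacle here: everything is an application of \eqref{first-int} to invariant functions, plus the skew-symmetric-plus-nilpotent argument in the second half of (i). The only place a reader might pause is checking that the symmetry of $S$ lets one collapse the two terms arising when differentiating $\la Y,SY\ra$ into the single expression $\la V',SY\ra$, but this is one line.
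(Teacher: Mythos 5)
Your proposal is correct and follows essentially the same route as the paper: identify the gradients $\grad f_X=(0,X)$ and $\grad g_S=(0,SY)$, plug into the first-integral criterion \eqref{first-int}, polarize to get skew-symmetry of $\ad(X)$ in (i), and use that a skew-symmetric nilpotent operator vanishes for the refinement. Your write-up in fact makes the ``skew-symmetric plus nilpotent implies zero'' step and the central-involution computation slightly more explicit than the paper does, but the argument is the same.
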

\begin{proof} \begin{enumerate}[(i)]
\item It is easy to see that $\grad_{(p, Y)}f_{X} = (0, X)$.  It follows  that $f_{X}$ is a first integral of the geodesic flow if and only if 
$$\la Y, [X,Y]\ra=0 \qquad \mbox{ for all } Y\in \nn,$$
which is equivalent to $\la Y, [X,W]\ra +\la W, [X,Y]=0$ for all $Y,W\in \nn$. The situation for a nilpotent Lie algebra $\nn$ follows from the skew-symmetric property, which implies that $\ad(X)\equiv 0$, that is $X\in \zz$. The last sentence follows from usual computations. 

\item Let $S: \mathfrak n \to \mathfrak n$ denote a symmetric endomorphism of $\mathfrak n$ and let define
$$
g_S(p, Y) = \frac{1}{2}\langle Y, SY\rangle.
$$

An elementary calculation gives 
$$
dg_S|_{(p, Y)}(U, V) = \langle SY, V\rangle
$$ 
and hence 
$$
\grad_{(p, Y)}(g_S) = (0, SY).
$$ 
Then by Equation (\ref{first-int}), we have that $g_S$ is a first integral of the geodesic flow if and only if Equation \eqref{eqcuad} holds.
 
\end{enumerate}
\end{proof}
	
	\begin{example} \label{n23} Let $\nn_{2,3}$ denote the free 3-step nilpotent Lie algebra in two generators. It is a Lie algebra of dimension five with basis $\{e_1,e_2,e_3,e_4,e_5\}$ satisfying the nontrivial Lie bracket relations
$$[e_1, e_2]= e_3 \qquad [e_1, e_3] = e_4, \qquad [e_2, e_3] = e_5.
$$
	Equipp $\nn_{2,3}$ with the metric such that $e_1, e_2, e_3, e_4, e_5$ is orthonormal. One verifies the following assertions by usual computations:
	
	\begin{enumerate}[(i)]
\item A skew-symmetric derivation $D:\nn_{2,3} \to \nn_{2,3}$ is of the form
$$D e_1 = \alpha e_2, \quad De_2 = -\alpha e_1, \quad De_4= \alpha e_5, \quad De_5 = -\alpha e_4, \quad \mbox{ for } \alpha\in \RR.
$$

\item Any symmetric map $S:\nn_{2,3} \to \nn_{2,3}$ giving rise to a first integral $g_S$ as in Proposition \ref{propcuad} has a matrix 
$$\left( \begin{matrix}
a & 0 & 0 & 0  & b-a\\
0 & a & 0 & a-b & 0 \\
0 & 0 & b & 0 & 0 \\
0 & a-b & 0 & c & d \\
b-a & 0 & 0 & d & f
\end{matrix}
\right)
\qquad \quad \mbox{ where } a,b,c,d,e,f\in \RR,
$$
matrix in the basis $e_1, e_2, e_3, e_4, e_5$. 
\end{enumerate}
	\end{example}

	Set a Lie group equipped with a left-invariant metric $(N,\la\,,\,\ra)$, endow its Lie algebra  $\nn$  with the induced metric and denote by  $TN$ its tangent space, trivialized as $TN\simeq N \times \nn$. 
	
	In the next paragraphs explicit involution formulas for first integrals on $TN$ are obtained. 
	
\begin{enumerate}
\item 	Given  elements $U,V\in \nn$ that induce the linear first integrals $f_U, f_V$ with the notation of Proposition \ref{propcuad},   one has
	
	\smallskip
	
	$\{f_U,f_V\}(p,Y)=0 \Longleftrightarrow 0=\la Y, [U,V]\ra \, \forall Y\in \nn \Longleftrightarrow [U,V]=0.$
	
	\smallskip
	
	\item Let $S, T:\nn \to \nn$ be symmetric maps giving rise to quadratic first integrals $g_S, g_T: TN \to \RR$. Then 
	
	\smallskip
	
$\{g_S,g_T\}(p,Y)=0 \Longleftrightarrow 0=\la Y, [SY,TY]\ra \, \forall Y\in \nn.$

\smallskip

\item Consider first integrals $f_U:TN\to \RR$ for $U\in \nn$ and $g_S:TN \to \RR$ from a symmetric map $S:\nn \to \nn$. Then 

\smallskip

$\{f_U,g_S\}(p,Y)=0 \Longleftrightarrow 0=\la Y, [U,SY]\ra \Longleftrightarrow$ $\ad(U)\circ S$ is skew-symmetric on $\nn$. 

For the last statement take $Y=V_1+V_2$ and apply to Equation $0=\la Y, [U,SY]\ra$. 
\end{enumerate}

		The proof of the next result can be done by usual computations and making use of the information given above. 
		
	\begin{lem} \label{involutions} Let $\nn$ denote a Lie algebra equipped with a metric $\la\,,\,\ra$. Consider the corresponding left-invariant metric on its Lie group $N$. The following statements hold:
	\begin{enumerate}[(i)]
	\item Let $U,V\in \nn$ be elements inducing the respective linear first integrals $f_U(p,Y)=\la U, Y\ra$, $f_V(p,Y)=\la V,Y\ra$. Then
	\begin{equation}\label{line}\{f_U,f_V\}(p,Y)=0 \qquad \mbox{ if and only if } \qquad [U,V]=0.
	\end{equation}
	
	\item Let $g_S:TN \to \RR$ denote a first integral induced by a symmetric map $S:\nn \to \nn$. Then 
	\begin{equation}\label{Slin}
	\{f_U,g_S\}(p,Y)=0\quad \mbox{ if and only if } \quad  \ad(U) \circ S \quad \mbox{ is skew-symmetric on $\nn$}.
	\end{equation}
			\item Assume $\nn$ is k-step nilpotent with $k=2,3$. Let $D$ be a skew-symmetric derivation of $\nn$ inducing the first integral $f_{D^*}$. Let $U\in \nn$, which gives the first integral $f_U:TN \to \RR$  and $S:\nn \to \nn$ which gives the first integral $g_S:TN \to \RR$.
		\begin{enumerate}
		\item $\{f_{D^*}, f_U\}=0$ if and only if $DU=0$. 
		\item $\{f_{D^*}, g_S\}=0$ if and only if  the linear map $DS:\nn \to \nn$ is  skew-symmetric.
		\end{enumerate}
	\end{enumerate}
	
	\end{lem}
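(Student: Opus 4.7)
The plan is to apply the Poisson bracket formula \eqref{poisson2} together with the gradient formulas already established in Section 3. Parts (i) and (ii) are immediate because both arguments are invariant functions whose gradients have the form $(0,\cdot)$, so \eqref{poisson2} collapses to its last summand. Substituting $\grad f_U=(0,U)$ and $\grad f_V=(0,V)$ gives $\{f_U,f_V\}(p,Y)=-\la Y,[U,V]\ra$, which vanishes identically in $Y$ iff $[U,V]=0$. Substituting $\grad g_S=(0,SY)$ gives $\{f_U,g_S\}(p,Y)=-\la Y,[U,SY]\ra$, and replacing $Y$ by $Y_1+Y_2$ polarizes this to $\la Y_1,\ad(U)SY_2\ra+\la Y_2,\ad(U)SY_1\ra=0$, i.e. the skew-symmetry of $\ad(U)\circ S$.

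For (iii)(a) there is a clean shortcut via Theorem \ref{t2}: since $f_U$ is required to be a first integral, Proposition \ref{propcuad}(i) forces $U\in\zz$; then $\Ad(p^{-1})U=U$, so $f_U=f_{U^*}$, and Theorem \ref{t2} yields $\{f_{D^*},f_U\}=f_{(DU)^*}$. Since every skew-symmetric derivation preserves the center we have $DU\in\zz$, and $f_{(DU)^*}=f_{DU}$ vanishes identically iff $DU=0$. For (iii)(b) no such shortcut is available, and I would plug the explicit gradient $\grad f_{D^*}(\exp W,Y)$ from the $k=2$ and $k=3$ formulas preceding Theorem \ref{t2} into \eqref{poisson2} against $\grad g_S=(0,SY)$. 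The key cancellations are as follows: the $\ad^\tau(DW)(Y)$ term in the horizontal component pairs with $-[DW,SY]$ coming from the bracket summand and they cancel by adjointness of $\ad^\tau$; in the 3-step case the $\tfrac12\ad^\tau([DW,W])(Y)$ term likewise cancels the $\tfrac12[[W,DW],SY]$ contribution after the identity $[DW,W]=-[W,DW]$; finally, the lone residual $W$-dependent term involves $[W,[W,DW]]\in C^2(\nn)\subseteq\zz$, so its bracket with $SY$ vanishes by centrality.

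After these cancellations only $-\la DY,SY\ra$ survives, and polarizing together with $D^\tau=-D$ and $S^\tau=S$ converts the vanishing of $\la DY,SY\ra$ for all $Y$ into $DS+(DS)^\tau=0$, which is exactly the skew-symmetry of $DS$. The main obstacle is just the careful bookkeeping in the 3-step gradient formula, where four or five terms must be correctly paired; the single technical ingredient that makes every cleanup work uniformly is the nilpotency observation that any iterated bracket carrying two nested factors of $W$ already lies in $\zz$, which is exactly what annihilates both the $[[W,DW],\cdot]$ term (for $k=2$) and the $[[W,[W,DW]],\cdot]$ term (for $k=3$).
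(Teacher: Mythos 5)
Your argument is correct. Parts (i) and (ii) follow the paper exactly: both are read off from the invariant-function bracket $\{f_1,f_2\}=-\la Y,[V_{f_1},V_{f_2}]\ra$, with the polarization $Y\mapsto Y_1+Y_2$ for (ii), and your treatment of (iii)(b) is the same direct computation the paper performs (the paper phrases it as ``replace $U$ by $SY$'' in the computation for (iii)(a), but the cancellations you list --- $\ad^\tau(DW)(Y)$ against $[DW,SY]$, $\tfrac12\ad^\tau([DW,W])(Y)$ against $\tfrac12[[W,DW],SY]$, and the vanishing of the $[W,[W,DW]]$ bracket by centrality of $C^2(\nn)$ --- are precisely the ones that occur). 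The only genuine divergence is (iii)(a): the paper computes $\{f_{D^*},f_U\}(\exp W,Y)=\la Y,DU\ra$ directly from the gradients, an identity valid for \emph{arbitrary} $U\in\nn$, whereas you route through Theorem \ref{t2} by first invoking Proposition \ref{propcuad}(i) to force $U\in\zz$, so that $f_U=f_{U^*}$ and $\{f_{D^*},f_{U^*}\}=f_{(DU)^*}=f_{DU}$. Your shortcut is clean and legitimately reuses the homomorphism property, but it is conditional on the hypothesis that $f_U$ is actually a first integral (hence $U$ central); the paper's computation gives the equivalence $\{f_{D^*},f_U\}\equiv 0\Leftrightarrow DU=0$ without that restriction, which is marginally stronger and keeps (iii)(a) and (iii)(b) as two instances of one calculation.
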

	
	\begin{proof}
	The proof of (i) and (ii) was given above.

(iii)	Assume now $D:\nn \to \nn$ denote a skew-symmetric derivation. Recall that one has the first integral $f_{D^*}(p,Y)=\la D^*(p), Y\ra$. Making use of the formula of the gradient given in the previous section for the case  3-step nilpotent, one gets
	$$
	\begin{array}{rcl}
	\{f_{D^*}, f_U\} (\exp W, Y) &  =  & \la -DY + \ad^{\tau}(DW)(Y)+ \frac12 \ad^{\tau}([DW,W])(Y), U\ra + \\
	 & & - \la Y, [DW,U] + \frac12 [[DW,W],U] \ra = \la Y, DU\ra.
	\end{array}
	$$
	Thus $\{f_{D^*}, f_U\} (\exp W, Y) = 0$ if and only if $DU=0$. 
	
	By replacing $U$ with $SY$ and following the same computation one gets
	
	\smallskip 
	$\{f_{D^*}, g_S\} (\exp W, Y) = \la Y, DS Y\ra$, 
	
	\smallskip
	
	which says that 
	$\{f_{D^*}, g_S\}\equiv 0$ if and only if $DS$ is  a skew-symmetric map on $\nn$, that is 
	
	\smallskip
	
	$\la Y_1, DS Y_2\ra + \la Y_2, DS Y_1\ra =0$ for all $Y_1, Y_2\in \nn$.
	
	\smallskip
	
	The proofs for the 2-step nilpotent situation can be obtained by easier computations. 
	 
	\end{proof} 
	
\begin{example}\label{rem-heis}
Take the Heisenberg Lie algebra of dimension $2n+1$, denoted $\hh_{2n+1}$ and introduced in Example \ref{heis}. In  \cite{KOR} the geodesic flow was  studied for the canonical metric defined by the orthonormal basis $X_1, \hdots , X_n, Y_1, \hdots, Y_n, Z$. 
Clearly any  symmetric map $A:\vv \to \vv$ can be trivially extended to a symmetric maps on $\nn$.
 The following result was proved (see Theorem 3.2 in \cite{KOR}):

\smallskip

{\em There exists a  bijection, denoted by $\Psi$, between the set of quadratic first integrals of the geodesic flow and the Lie subalgebra of skew-symmetric derivations of $\hh_{2n+1}$. Moreover,}
 $$\{g_{A_1}, g_{A_2}\} =0\quad \mbox{ if and only if }\quad [\Psi A_1, \Psi A_2] =0 \mbox{ in }\sso(\vv).$$
\end{example}

\subsection{Invariant quadratic first integrals on $k$-step nilpotent Lie groups, k=2,3} 
A 2-step nilpotent Lie group corresponds to a 2-step nilpotent Lie algebra $\nn$: the Lie bracket satisfies the relation $[u,[v,w]]=0$ for all $u,v,w\in \nn$. 
There is a natural decomposition of the Lie algebra $\nn$ whenever it is equipped with a metric $\la\, ,\, \ra$:
\begin{equation}\label{split2step}
\nn = \vv \oplus \zz\qquad \mbox{ with } \quad \vv=\zz^{\perp},
\end{equation}
where $\zz$ denotes the center of $\nn$. Notice that the commutator of $\nn$ denoted as $C(\nn)$ is contained in the center, $\zz$. 

Eberlein (Remark 1 in page 472 \cite{Eb2}) and later Del Barco and Moroianu in \cite{BM} described quadratic first integrals on 2-step Lie algebras, in terms of the orthogonal splitting in Equation \eqref{split2step}.    This specifies the conditions given in Proposition \ref{propcuad} for 2-step nilpotent Lie algebras. 

\begin{prop}\label{cuad-2} \cite{BM} Let $(\nn,\la\,,\,\ra)$ denote a 2-step nilpotent Lie algebra equipped with a metric. A symmetric map $S:\nn \to \nn$ gives rise to a first integral if and only if the next conditions occur
\begin{enumerate}[(i)]
\item $[S(X), Y] = [X, S(Y)]$, for all $X, Y\in \vv$,
\item $\ad(X) \circ S|_{\zz}$ is skew-symmetric on $\zz$ for all $X\in \vv$. 
\end{enumerate}
\end{prop}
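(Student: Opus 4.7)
The plan is to reduce the statement to the criterion $\langle Y, [SY, Y]\rangle = 0$ for all $Y \in \nn$ established in Proposition \ref{propcuad}(ii), and then exploit the orthogonal splitting $\nn = \vv \oplus \zz$ together with the two structural facts that come from 2-step nilpotency: $[\zz, \nn] = 0$ and $[\nn, \nn] \subseteq \zz$. Writing $Y = X + Z$ with $X \in \vv$ and $Z \in \zz$, the bracket collapses to
\[
[SY, Y] = [SX, X] + [SZ, X],
\]
because any bracket involving $Z$ in the second slot vanishes. Both terms on the right lie in $[\nn, \nn] \subseteq \zz$, so when paired against $X \in \vv = \zz^{\perp}$ they contribute zero, and the integrability condition simplifies to
\[
\langle Z, [SX, X]\rangle + \langle Z, [SZ, X]\rangle = 0 \qquad \text{for all } X \in \vv, \ Z \in \zz.
\]

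Next I would separate the two conditions by a homogeneity argument: replacing $Z$ by $tZ$ and comparing coefficients of $t$ and $t^{2}$ forces each summand to vanish individually (one could equivalently argue by separately setting $X = 0$ or $Z = 0$ and varying). This yields
\[
\langle Z, [SX, X]\rangle = 0 \quad \text{and} \quad \langle Z, [SZ, X]\rangle = 0
\]
for all admissible $X$, $Z$. Since in each identity the inner bracket lies in $\zz$ and the pairing against \emph{arbitrary} $Z \in \zz$ must vanish, non-degeneracy of the metric restricted to $\zz$ promotes both to identities in $\zz$ respectively $\RR$ for all relevant arguments.

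Then I would polarize. From $[SX, X] = 0$ in $\zz$ for all $X \in \vv$, the standard substitution $X \to X_{1} + X_{2}$ gives $[SX_{1}, X_{2}] + [SX_{2}, X_{1}] = 0$, which combined with the antisymmetry of the bracket yields exactly condition (i), namely $[SX_{1}, X_{2}] = [X_{1}, SX_{2}]$. From $\langle Z, [SZ, X]\rangle = 0$, polarizing in $Z = Z_{1} + Z_{2}$ produces
\[
\langle Z_{1}, \ad(X) S Z_{2}\rangle + \langle Z_{2}, \ad(X) S Z_{1}\rangle = 0,
\]
which is precisely the statement that $\ad(X) \circ S|_{\zz}\colon \zz \to \zz$ is skew-symmetric, i.e.\ condition (ii). All steps are reversible, so the converse implication comes for free by reassembling these observations.

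The one point I would be most careful about is the decomposition of $SX$ for $X \in \vv$: in general $S$ does not preserve the splitting $\nn = \vv \oplus \zz$, so $SX$ has both a $\vv$-component and a $\zz$-component. However, only the $\vv$-component contributes to $[SX, X]$ and to $[SX, Y]$ (for $Y \in \vv$), since the $\zz$-component centralizes everything; this is what makes the conditions depend only on the $\vv\to\vv$ block of $S$ in (i) and on the $\zz\to\zz$ block in (ii), and it is the only place where one needs to keep track carefully that the 2-step hypothesis is really being used.
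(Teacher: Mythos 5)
Your argument is correct and follows essentially the same route as the paper: reduce to the criterion $\la Y,[SY,Y]\ra=0$, split $Y=X+Z$ along $\nn=\vv\oplus\zz$, separate the resulting identity into its parts of degree one and degree two in $Z$, and polarize each to obtain (i) and (ii) respectively. The care you take with the $\zz$-component of $SX$ (which centralizes everything and so drops out of all brackets) is exactly the point the paper's computation relies on implicitly.
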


\begin{proof} Let $S:\nn \to \nn$ be a  symmetric map that induces a first integral, it holds $\la [Y,SY],Y\ra=0$ for all $Y\in \nn$. Write $Y=V + Z$ with respect to the splitting in Equation \eqref{split2step}. Thus 

$$0=\la [V +Z, S(V+Z)], V + Z\ra = \la [V, S(V+Z)], Z\ra = \la [V, S(V)],  Z\ra + \la [V, S(Z)],  Z\ra.$$
From the last equality we may have
$$\la [V, S(V)],  Z\ra = 0 =  \la [V, S(Z)],  Z\ra \quad \mbox{ for all }V\in \vv, Z\in \zz,$$
since the left side is linear in $Z$ but the right one is quadratic. Take $V=X + Y \in \vv$, this gives
$$0 = \la [X, S(Y)],  Z\ra + \la [Y, S(X)],  Z\ra \quad \mbox{ for all } X, Y\in \vv,\, Z \in \zz, $$
and one proves the first assertion. 

For the second assertion take $Z=Z_1+Z_2\in \zz$ and by using the relation $0 =  \la [X, S(Z)],  Z\ra$ one gets
$$0= \la [X, S(Z_1)],  Z_2\ra + \la [X, S(Z_2)],  Z_1\ra \qquad \mbox{ for all }Z_1, Z_2\in \zz, V\in \vv.$$

\end{proof}

For 2-step nilpotent Lie groups, the next invariant first integrals were proposed by Butler in \cite{Bu1}. Assume the orthogonal decomposition of the 2-step nilpotent Lie algebra $\nn$, 
$\nn=\vv \oplus \zz$,  as in Relation \eqref{split2step}, where $\vv=\zz^{\perp}$, and $\zz$ denotes the center of the Lie algebra. A vector $Y\in \nn$ admits a unique  decomposition of the form $Y=V + Z\in \vv \oplus \zz$. 

For $i\geq 0$, define the functions  $g_i:TN \to \RR$ by
\begin{equation}\label{gi}
g_i(p,Y)=\la V, j(Z)^{2i}V\ra\qquad \mbox{ for } Y=V +Z \in \vv\oplus \zz,
\end{equation}
where  $j(Z):\vv \to \vv$ denotes the skew-symmetric map given by $\la j(Z) V,W\ra=\la Z, [V,W]\ra$, for $Z\in \zz$, and $V,W\in \vv$. 

Careful computations show that the gradient of $g_i$ is given by
\begin{equation}\label{gradgi}
\grad g_i(p,Y)=(0, \sum_{j=0}^{2i-1} (-1)^{j+1} [j(Z)^jV, j(Z)^{2i-1-j}V] + 2 j(Z)^{2i}V).
\end{equation}

Therefore it is easy to verify that $g_i$ is a first integral, since

$\la Y, 2 [V,  j(Z)^{2i} V]\ra= \la Z, 2 [V,  j(Z)^{2i} V]\ra = 2\la j(Z) V, j(Z)^{2i} V \ra = 0$, since  $j(Z)^{2i+1}$ is skew-symmetric for all $i\geq 1$. Moreover these functions are pairwise in involution $$\{g_i, g_k\}\equiv 0, $$ 
which follows from the fact that $j(Z)^{2i-2k+1}$ is skew-symmetric if one assumes that $i\geq k$. 

\begin{itemize}
\item Let $g_S$ denote a first integral of the form $g_S(p,Y)=\la SY, Y\ra$ for $S:\nn \to \nn$ being a symmetric map. Then $\{g_i,g_S\}(p,Y)=0$ if and only if 
$$0=\la Z, [j(Z)^{2i}V, S(Z+V)]\ra= \la Z, [j(Z)^{2i}V, S(Z)]\ra +  \la Z, [j(Z)^{2i}V, S(V)]\ra,$$
where one can see  in the last equality that the first term is quadratic on $Z$ while the second one is linear. This implies that
$$\la j(Z)^{2i+1}V, S(Z)\ra = 0 \quad \mbox{and }\quad [S_{|_{\vv}}, j(Z)^{2i+1}]=0.$$
The first equation follows from the fact that $\la Z, [j(Z)^{2i}V, S(Z)]\ra = 0$ -note that $S(Z)$ could have a non trivial component in $\vv$. The last equation is derived from the condition $\la Z, [j(Z)^{2i}V, S(V)]\ra=0$. 

\item Assume $D:\nn \to \nn$ is a skew-symmetric derivation. Let $f_{D^*}$ denote the first integral, $f_{D^*}(\exp W,Y)=\la DW -\frac12 [W,DW],Y\ra$. Then $\{f_{D^*}, g_i\}\equiv 0$ if and only if

\begin{enumerate}[(i)]
	\item $[D_{|_{\vv}},j(Z)^{2i}]=0$ for all $Z\in \zz$ and 
	\item $0=\sum_{j=0}^{2i-1} (-1)^{j+1}\la j(Z) j(DZ) j(Z)^{2i-1-j}V, V\ra =0$.
\end{enumerate}

\end{itemize}

The proof follows by making use of the expressions of the corresponding gradient vector fields. This is summarized in the next result. 

\begin{prop} With the previous notations the invariant maps $g_i$ from Equation \eqref{gi} satisfy
	\begin{enumerate}[(i)]
					\item $\{g_i,g_k\} \equiv 0$, for all $i,k$.
		\item For any symmetric map $S:\nn \to \nn$, take $g_S$ the corresponding invariant first integral, $g_S(p,Y)=\la SY,Y\ra$. Then $\{g_S,g_i\}\equiv 0$ if and only if
		\begin{enumerate}
			\item $\la j(Z)^{2i+1}V, S(Z)\ra = 0$ for all $Z\in \zz, V\in \vv$ and \quad \item $ [S_{|_{\vv}}, j(Z)^{2i+1}]=0$, where  $S_{|_{\vv}}:\vv \to \vv$. 
		\end{enumerate}
	\item Let $D:\nn \to \nn$ denote a skew-symmetric derivation which gives rise to a first integral $f_{D^*}$. Then $\{f_{D^*},g_i\}\equiv 0$ if and only if
	\begin{enumerate}
		\item $[D_{|_{\vv}}, j(Z)^{2i}]=0$, for $D_{|_{\vv}}:\vv \to \vv$ and
		\item $\sum_{j=0}^{2i-1}(-1)^{j+1} \la j(Z) j(DZ) j(Z)^{2i-1-j}V, V\ra=0$, for all $Z\in \zz, V\in \vv$. 
	\end{enumerate}

	\end{enumerate}
\end{prop}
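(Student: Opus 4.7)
The plan is to compute each Poisson bracket directly from the general formula \eqref{poisson2}, using the explicit gradient expressions already available: \eqref{gradgi} for $g_i$, the formula $\grad g_S=(0,SY)$ from Proposition \ref{propcuad}, and the 2-step formula $\grad f_{D^*}(\exp W,Y)=(-DY+\ad^{\tau}(DW)Y,\,DW-\tfrac12[W,DW])$ derived in Section~3. Throughout I decompose $\grad g_i(p,Y)=(0,A_i+B_i)$ with $A_i=\sum_{j=0}^{2i-1}(-1)^{j+1}[j(Z)^jV,j(Z)^{2i-1-j}V]\in[\vv,\vv]\subset\zz$ and $B_i=2j(Z)^{2i}V\in\vv$, and exploit repeatedly the 2-step identities $[\nn,\nn]\subset\zz$ and $[\zz,\nn]=0$, which force many brackets to collapse.

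For (i) the two functions are invariant, so by \eqref{bracket-invariants} one has $\{g_i,g_k\}=-\la Y,[V_{g_i},V_{g_k}]\ra$. The central components $A_i,A_k$ contribute nothing, so only $[B_i,B_k]=4[j(Z)^{2i}V,j(Z)^{2k}V]\in\zz$ remains, and pairing with $Y=V+Z$ picks up only the $Z$-part. Using the defining identity $\la Z,[X,X']\ra=\la j(Z)X,X'\ra$ and the skew-symmetry of the odd power $j(Z)^{2(i+k)+1}$ collapses the result to $-4\la V,j(Z)^{2(i+k)+1}V\ra=0$.

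For (ii), invariance of both functions gives $\{g_S,g_i\}=-\la Y,[SY,V_{g_i}]\ra$; again $A_i$ drops out, and only the $\vv$-components of $SY$ contribute in $[SY,B_i]$. Applying $\la Z,[X,X']\ra=\la j(Z)X,X'\ra$ and the skew-symmetry of $j(Z)^{2i+1}$ one is left with $2\la S|_\vv V,j(Z)^{2i+1}V\ra+2\la S(Z),j(Z)^{2i+1}V\ra$. These two summands have different bidegrees in $(V,Z)$, so must vanish separately for the bracket to be identically zero. The first, polarized in $V$ and using the symmetry of $S|_\vv$, yields the commutator condition (b) $[S|_\vv,j(Z)^{2i+1}]=0$; vanishing of the second for all $V,Z$ is exactly condition (a).

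For (iii), I substitute the gradient of $f_{D^*}$ and $\grad g_i=(0,V_{g_i})$ into \eqref{poisson2}. The term involving $\tfrac12[[W,DW],V_{g_i}]$ vanishes because $[W,DW]\in\zz$, and the pair $\la\ad^{\tau}(DW)Y,V_{g_i}\ra$ and $-\la Y,[DW,V_{g_i}]\ra$ cancel identically. What remains is the remarkably $W$-independent expression $\{f_{D^*},g_i\}(\exp W,Y)=\la Y,DV_{g_i}\ra=\la V,DB_i\ra+\la Z,DA_i\ra$, using that $D$ preserves both $\vv$ and $\zz$. Expanding via the derivation identity $[D,j(Z)]=j(DZ)$, which is immediate from $D$ being a skew-symmetric derivation, separates the two sources of nonvanishing: the $DV$-piece of $DB_i$ contributes $2\la V,j(Z)^{2i}DV\ra$, which after polarization and using skew-symmetry of $D|_\vv$ yields (a) $[D|_\vv,j(Z)^{2i}]=0$; the $j(DZ)$-pieces from both $DB_i$ and $DA_i$ assemble into the sum condition (b). The hard part will be precisely this last bookkeeping step: verifying the exact cancellation of the $W$-dependent terms and then separating $\la Y,DV_{g_i}\ra$ cleanly into the two bidegree-homogeneous pieces corresponding to (a) and (b).
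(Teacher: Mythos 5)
Your computations for parts (i) and (ii) are correct and follow the same route as the paper: reduce to formula \eqref{bracket-invariants}, discard the central component $A_i$ of $\grad g_i$, and use $\la Z,[X,X']\ra=\la j(Z)X,X'\ra$ together with skew-symmetry of odd powers of $j(Z)$. (A small point: the relevant odd power in (i) is $j(Z)^{2(i+k)+1}$, as you say, rather than the paper's $j(Z)^{2i-2k+1}$; either way it is odd, so the conclusion stands.) Your cancellation of the $W$-dependent terms in (iii) is also right, and one is left with $\{f_{D^*},g_i\}=\la Y,DV_{g_i}\ra=\la V,DB_i\ra+\la Z,DA_i\ra$.

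The gap is in the final separation step of (iii), and it is fatal to the plan. The two summands $\la V,DB_i\ra$ and $\la Z,DA_i\ra$ are \emph{not} of different bidegree: both are quadratic in $V$ and of total degree $2i$ in $Z$ (counting $DZ$ as one $Z$), so no homogeneity argument splits them --- and in fact they cancel identically. Since $D|_{\vv}$ is skew and $j(Z)^{2i}$ is symmetric, the quadratic form $\la V,Dj(Z)^{2i}V\ra$ sees only the symmetric part of $Dj(Z)^{2i}$, which is $\tfrac12[D,j(Z)^{2i}]$; hence $\la V,DB_i\ra=\la V,[D,j(Z)^{2i}]V\ra$. On the other hand, writing $\la Z,DA_i\ra=-\la DZ,A_i\ra$, using $\la DZ,[X,X']\ra=\la j(DZ)X,X'\ra$ and transposing the factors $j(Z)^{2i-1-j}$, the signs $(-1)^{j+1}$ are exactly absorbed and one gets $\la Z,DA_i\ra=-\sum_{l=0}^{2i-1}\la j(Z)^{l}j(DZ)j(Z)^{2i-1-l}V,V\ra=-\la [D,j(Z)^{2i}]V,V\ra$. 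So $\{f_{D^*},g_i\}\equiv 0$ for \emph{every} skew-symmetric derivation $D$, which is as it must be: $j(\psi Z)=\psi j(Z)\psi^{-1}$ for any orthogonal automorphism $\psi$, so $g_i$ is invariant under the full isometry group, not just under left translations. Consequently the ``only if'' direction of (iii) cannot be established: on $n_{6,26}$ the derivation $De_1=e_2$, $De_2=-e_1$, $De_3=0$ has $[D|_{\vv},j(e_5)^{2}]\neq0$, yet $\{f_{D^*},g_1\}\equiv0$. This is a defect of the statement itself (the paper gives no details for (iii) beyond ``use the gradients''), but your proposal does not repair it: the ``bookkeeping step'' you defer as the hard part is precisely where the argument collapses.
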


Notice that a skew-symmetric derivation $D:\nn \to \nn$ satisfies, $D\zz \subseteq \zz$, $D\vv \subseteq \vv$ and $j(DZ)=[D,j(Z)]$, with respect to the orthogonal decomposition $\nn=\vv \oplus \zz$. 

\smallskip

Assume now that $\nn$ is a 3-step nilpotent Lie algebra (thus, $[[[U,V],X],W]=0$ for all $X,U,V,W\in \nn$). 
Let $\la\,,\,\ra$ denote a metric on $\nn$ and denote by $C(\nn)$ the commutator of $\nn$. One has the orthogonal decomposition 
\begin{equation}\label{dec-3n}
\nn = \vv \oplus C(\nn) \qquad \mbox{ where } \vv= C(\nn)^{\perp}.
\end{equation}
In this case it holds $\zz \supseteq C^2(\nn)$ where $C^2(\nn)$ is the ideal spanned  by elements of the form $[[U,V],W]$. And one has a result for symmetric maps $S:\nn \to \nn$ to induce first integrals, similar to the 2-step situation.

\begin{prop} \label{cuad-3}
Let $(\nn,\la\,,\,\ra)$ denote a 3-step nilpotent Lie algebra equipped with a metric. A symmetric map $S:\nn \to \nn$ gives rise to a first integral if and only if the next conditions occur
\begin{enumerate}[(i)]
\item $[SX, Y] = [X, SY]$, for all $X, Y\in \vv$,
\item $(\ad(X) \circ S - \ad(SX))|_{C(\nn)}$ is skew-symmetric on $C(\nn)$ for all $X\in \vv$. 
\item $\la X, [SX,X]\ra=0$ for all $X\in C(\nn)$.
\end{enumerate}
\end{prop}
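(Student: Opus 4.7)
The strategy is to invoke Proposition \ref{propcuad}(ii), which reduces the problem to checking when $\la Y, [SY, Y]\ra = 0$ holds for every $Y\in\nn$. Writing $Y = V + Z$ with $V \in \vv$ and $Z \in C(\nn)$ according to the orthogonal splitting \eqref{dec-3n}, I would expand the trilinear expression and group the eight resulting terms by their bidegree in $(V,Z)$. Because $V$ and $Z$ vary independently, the total vanishing is equivalent to the vanishing of each bihomogeneous piece of bidegrees $(3,0), (2,1), (1,2), (0,3)$.

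The next step is to simplify each piece using two tools: the identity $[C(\nn), C(\nn)] \subseteq C^3(\nn) = 0$ (which follows from 3-step nilpotency via the Jacobi identity), and the orthogonality relations $\vv \perp C(\nn)$ and, in particular, $\vv \perp C^2(\nn)$. Writing $SV = (SV)_\vv + (SV)_c$ and $SZ = (SZ)_\vv + (SZ)_c$ for the components relative to the splitting, any bracket of the form $[(SV)_c, Z]$ or $[(SZ)_c, Z]$ vanishes. A careful but routine check then gives that the $(3,0)$-piece is zero automatically (since $[SV, V]\in C(\nn)\perp V$); the $(2,1)$-piece collapses to $\la Z, [SV, V]\ra = 0$, hence to $[SV,V]=0$; the $(1,2)$-piece collapses to $\la Z, [SV, Z] + [SZ, V]\ra = 0$; and the $(0,3)$-piece is exactly condition (iii).

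Polarizing the $(2,1)$-identity in $V$ produces $[SV_1, V_2] + [SV_2, V_1] = 0$, equivalent to condition (i). Polarizing the $(1,2)$-identity in $Z$ yields
\begin{equation*}
\la Z_1, (\ad(V)\circ S - \ad(SV))(Z_2)\ra + \la Z_2, (\ad(V)\circ S - \ad(SV))(Z_1)\ra = 0,
\end{equation*}
and a direct bracket count (using the same vanishing facts) confirms that $\ad(V)\circ S - \ad(SV)$ sends $C(\nn)$ into $C(\nn)$, so the polarized identity is precisely the assertion that its restriction to $C(\nn)$ is skew-symmetric, which is condition (ii). The converse is immediate by reversing each step: (iii) kills the $(0,3)$-piece, (i) specialized to $V_1=V_2$ kills the $(2,1)$-piece, and (ii) together with its symmetrization kills the $(1,2)$-piece.

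The main obstacle is the systematic bookkeeping of bracket components: tracking which terms die by 3-step nilpotency (in particular the key identity $[C(\nn), C(\nn)] = 0$, absent from the 2-step setting of Proposition \ref{cuad-2}) and which inner products die by orthogonality, so that each bihomogeneous identity cleanly isolates the intended condition. The new feature beyond the 2-step case is that $SZ$ for $Z \in C(\nn)$ may have a nonzero $\vv$-component, and this is precisely what introduces the correction term $-\ad(SV)$ in condition (ii).
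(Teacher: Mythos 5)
Your proposal is correct and follows essentially the same route as the paper: both reduce to the criterion $\la Y,[SY,Y]\ra=0$ from Proposition \ref{propcuad}, split $Y=V+Z$ along $\nn=\vv\oplus C(\nn)$, separate the resulting identity into bihomogeneous pieces, and polarize to obtain conditions (i)--(iii). Your write-up is somewhat more explicit about which cross terms die by orthogonality and by $[C(\nn),C(\nn)]=0$, but the argument is the same.
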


\begin{proof}  Let $U\in C(\nn)$ and $X\in \vv$. A symmetric map $S:\nn \to \nn$ gives rise to a first integral $g_S$ if and only if 
$$ \begin{array}{rcl}
0 & = & \la [S(X+ U), X+U], X+U\ra = \la [S(X+ U), X+U], U\ra \\
 & = &  \la [S(X), X], U\ra + \la [S(X), U], U\ra + \la  [S(U), X], U\ra.
\end{array}
$$
Notice that while the first term in the last equality is linear on $U$, the others are quadratic, and this gives
$$\la [S(X), X], U\ra = 0 =\la [S(X), U], U\ra + \la  [S(U), X], U\ra.$$
Similar to the proof for case 2-step nilpotent we get that $\la [S(X), X], U\ra = 0$ is equivalent to 
$[S(X), Y] = [X, S(Y)] $ for all $X,Y\in\vv$. 

On the other hand one has  the equality $0 =\la \ad(S(X)) U, U\ra - \la  \ad(X)(S(U)) , U\ra,$ for all $X\in \vv, U\in C(\nn)$,  which with similar techniques already used gives rise to the fact that the map $(\ad(S(X)) -\ad(X) \circ S)_{|_{C(\nn)}}$ is skew-symmetric on $C(\nn)$ for all $X\in \vv$. 

If one assumes that $S$ gives rise to a first integral, then clearly $\la U, [SU,U]\ra=0$ for all  $U\in C(\nn)$. Assuming (i),(ii) and (iii), one gets the condition for $g_S$ to be a first integral.

\end{proof}

\section{applications on low dimensions}

In this section we apply the results of the previous sections for the construction of first integrals on nilpotent Lie algebras of dimensions at most six. 

A list  of nilpotent Lie algebras of dimension at most six was obtained by the de Graaf in  \cite{Gra}, see section 4. From this we get the k-step nilpotent -non abelian- Lie algebras, with $k=2,3$.  According to the dimension one has

\begin{enumerate}[(i)]
\item Dimension three: The Heisenberg Lie algebra $\hh_3$ (see Example \ref{heis}).
\item Dimension four: 
\begin{enumerate}
\item The trivial extension $\RR \oplus \hh_3$ and

\item  The Lie algebra $n_2$ from Example \ref{n2}. 
\end{enumerate}

\item Dimension five. Assume the Lie algebra is spanned by vectors $e_1, e_2, e_3, e_4, e_5$.  
 \begin{enumerate}
\item The trivial extensions $\RR^2 \oplus \hh_3$, and $\RR \oplus n_2$. 
\item The Heisenberg Lie algebra of dimension five, $\hh_5$ (see Example \ref{heis}).
\item The Lie algebra $n_1$ of  Example \ref{n1}. 
\item  The ``star'' Lie algebra $n_3$, with Lie brackets $[e_1, e_2]=e_4$, $[e_1, e_3]= e_5$. 
\item The free 3-step nilpotent Lie algebra in two generators $\nn_{2,3}$ of Example \ref{n23}. 
\end{enumerate}

Note that the Lie algebra $n_3$ can be constructed starting with the  star graph in three vertices, see more details in \cite{Ov1}.

There exist two more nilpotent  Lie algebras in dimension five, which are 4-step nilpotent: 
\begin{enumerate}
\item 4-step almost abelian: $[e_1,e_2]=e_3, \, [e_1, e_3]=e_4, \, [e_1, e_4]=e_5.$
\item 4-step non almost abelian: $[e_1,e_2]=e_3, \, [e_1, e_3]=e_4, \, [e_1, e_4]=e_5,\, [e_2, e_3]=e_5.$
\end{enumerate}
\end{enumerate}
Recall that a Lie algebra $\nn$ is called {\em almost abelian} if it has a codimension one abelian ideal.

A goal now is to prove the next theorem and to study  the situation in dimension six. 

\begin{thm} \label{t3} For any $k$-step nilpotent Lie algebra, k=2,3, of dimension at most five, there exists a left-invariant metric whose geodesic flow is Liouville integrable. 

 \end{thm}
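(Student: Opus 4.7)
The plan is to enumerate the Lie algebras listed at the beginning of Section 5 (three in dimension three and four, six in dimension five) and exhibit, for each, a left-invariant metric together with $\dim\nn$ pairwise Poisson-commuting, functionally independent first integrals. In every case I would equip $N$ with the metric making the basis fixed in Section~5 orthonormal.

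The main tool is what one might call the abelian-ideal principle: if $\nn$ contains an abelian ideal $\aa$ of codimension one, then the family
\[
\{f_{X^*}: X \text{ in a basis of }\aa\}\cup\{E\}
\]
consists of $\dim\nn$ first integrals, pairwise commuting by Theorem~\ref{t2} together with $\{f_{X^*},f_{Y^*}\}=f_{[X,Y]^*}=0$ for $X,Y\in\aa$. Functional independence at a generic $(p,Y)$ is straightforward from the formula $\grad f_{X^*}(p,Y)=(\ad^{\tau}(\Ad(p^{-1})X)Y,\Ad(p^{-1})X)$ together with the fact that $\aa$ is spanned by the $X$'s. This principle disposes of $\hh_3$, $n_2$, $n_3$, as well as the trivial central extensions $\RR\oplus\hh_3$, $\RR^2\oplus\hh_3$, $\RR\oplus n_2$, by choosing in each case the obvious codimension-one abelian ideal (e.g.\ $\text{span}\{e_2,e_3,e_4,e_5\}$ in $n_3$, and $\text{span}\{e_0,e_2,e_3,e_4\}$ in $\RR\oplus n_2$).

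For the three remaining algebras $\hh_5$, $n_1$, and $\nn_{2,3}$ no codimension-one abelian ideal exists, so one further first integral is needed. I would combine $E$ with the Killing-field integrals coming from a maximal abelian ideal (of codimension two) and then adjoin a fifth integral of one of two kinds: an invariant quadratic $g_S$ given by a symmetric map satisfying Proposition~\ref{cuad-2} or~\ref{cuad-3}, or an integral $f_{D^*}$ from a skew-symmetric derivation vanishing on the ideal. The point is that invariance makes $g_S$ automatically commute with every $f_{X^*}$ by Lemma~\ref{lema2}, while the condition $D\aa=0$ gives $\{f_{D^*},f_{X^*}\}=f_{(DX)^*}=0$ for $X\in\aa$ by Lemma~\ref{involutions}(iii). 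Concretely: for $\hh_5$ I use the abelian ideal $\text{span}\{X_1,X_2,Z\}$ together with $g_S=\tfrac12(x_1^2+y_1^2)$ obtained by projecting onto $\text{span}\{X_1,Y_1\}$ (which satisfies Proposition~\ref{cuad-2}); for $n_1$ I use the abelian ideal $\text{span}\{e_3,e_4,e_5\}$ together with the derivation of Example~\ref{n1} with $\alpha=1,\beta=0$, which kills the whole ideal; for $\nn_{2,3}$ I use the abelian ideal $\text{span}\{e_3,e_4,e_5\}$ together with a symmetric map from the family in Example~\ref{n23} (e.g.\ $a=1$, $b=c=d=f=0$), which yields an invariant quadratic $g_S$ that is not a polynomial in $E$ and the $f_{e_i^*}$.

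The main obstacle is verifying functional independence in these residual cases. After accounting for both the base and the fiber components of the gradients at a generic point $(p,Y)$, one must check that the $5\times 10$ Jacobian has rank five on a dense open subset of $TN$. In each case this reduces to a simple polynomial non-vanishing condition in the coordinates of $Y$, but it requires care because some of the chosen integrals are invariant (their base component is zero), while others, such as $f_{e_3^*}$ in $n_1$ and $f_{D^*}$, may have vanishing fiber component at the identity. Carrying out this verification at the base point $p=e$ for each of the three residual algebras, and noting that a non-vanishing condition at one point is an open condition, completes the proof.
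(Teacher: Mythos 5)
Your strategy coincides with the paper's own proof: run through de Graaf's list, use Theorem \ref{t2} to convert a codimension-one abelian ideal into a complete commuting family $\{\En\}\cup\{f_{X^*}\}$, and for the three algebras without such an ideal ($\hh_5$, $n_1$, $\nn_{2,3}$) supplement a codimension-two abelian ideal with either an invariant quadratic $g_S$ or a derivation integral $f_{D^*}$, invoking Lemma \ref{lema2} and Theorem \ref{t2} for the involution. Your concrete choices differ only cosmetically: for $\hh_5$ the paper uses both projections $g_{S_1},g_{S_2}$ onto $\mathrm{span}\{X_i,Y_i\}$ where you use $\En$ and one projection; for $\nn_{2,3}$ the paper takes $Se_1=e_5$, $Se_2=-e_4$, $Se_3=e_3$, $Se_4=-e_2$, $Se_5=e_1$ where you take the $a=1$ member of the family in Example \ref{n23}; your derivation for $n_1$ is exactly the paper's. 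Both of your alternative choices are legitimate (they satisfy Propositions \ref{cuad-2} and \ref{cuad-3} and are invariant, hence commute with every $f_{X^*}$). One citation slip: the identity $\{f_{D^*},f_{X^*}\}=f_{(DX)^*}$ comes from the proof of Theorem \ref{t2}; Lemma \ref{involutions}(iii) concerns the invariant linear integrals $f_U$, not $f_{U^*}$.

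The one step you must tighten is the final independence argument. The definition used here requires the gradients to be linearly independent on an open \emph{dense} subset of $TN$, and ``a non-vanishing condition at one point is an open condition'' only yields an open set. The repair is short but should be stated: in exponential coordinates every integral in your families is polynomial in $(W,Y)$, so each $5\times 5$ minor of the Jacobian is a polynomial on $TN\cong\RR^n\times\RR^n$; your computation at $p=e$ shows some minor is not identically zero, hence the common zero locus of all minors is a proper algebraic subset with empty interior, and the independence locus is open and dense. (The paper instead exhibits explicit dense open sets, e.g.\ $\{\la Y,e_1\ra\neq 0\}$ for $n_3$.) You were right to warn that $\grad f_{D^*}$ has vanishing fiber component at $p=e$ for $n_1$, so there the rank count genuinely needs the base component $-DY$, which is nonzero precisely when $(y_1,y_2)\neq(0,0)$.
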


The proof of this theorem will be given in two steps. In the first one, we exhibit a {\em complete set of first integrals}, that is a maximal set of pairwise commuting first integrals. This is contained in Lemma \ref{firstlem} and secondly we complete the proof of the integrability by proving  the linear independence.

\begin{lem}\label{firstlem} For any $k$-step nilpotent Lie algebra, k=2,3, of dimension at most five, there exists a left-invariant metric whose geodesic flow admits a complete set of first integrals. 
	
	Moreover, except for the free 3-step nilpotent Lie algebra in two generators, a complete set of commuting first integrals is constructed from Killing vector fields, - see Theorem \ref{t2}.
	
	\end{lem}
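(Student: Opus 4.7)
The proof will proceed by a case-by-case analysis over the explicit classification of $k$-step ($k=2,3$) nilpotent Lie algebras of dimension at most five listed above. For each such $\nn$, the plan is to fix the left-invariant metric which makes the displayed basis orthonormal, and then to exhibit $n=\dim\nn$ pairwise Poisson-commuting first integrals drawn from the three sources developed in Sections~3 and~4: right-invariant Killings $f_{X^*}$ with $X\in\nn$ and derivation Killings $f_{D^*}$ with $D\in\hh$ (Theorem~\ref{t2}), and invariant polynomials $f_X$, $g_S$ on the fibre (Propositions~\ref{propcuad}, \ref{cuad-2}, \ref{cuad-3}).

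My first step in each case will be to locate an abelian ideal $\aa\subseteq\nn$ of maximal dimension. By Theorem~\ref{t2} the collection $\{f_{X^*}:X\in\aa\}$ is abelian in $(C^\infty(TN),\{\cdot,\cdot\})$, since $\{f_{X^*},f_{Y^*}\}=f_{[X,Y]^*}=0$ whenever $X,Y\in\aa$, and each member Poisson-commutes with the energy $\En$. Whenever $\aa$ has codimension one in $\nn$ this already yields $n$ commuting integrals; this disposes of $\hh_3$, $\RR\oplus\hh_3$, $\RR^2\oplus\hh_3$, $n_2$, $\RR\oplus n_2$ and $n_3$ via the evident ideals $\{Y_1,Z\}$, $\{e_2,e_3,e_4\}$, $\{e_2,e_3,e_4,e_5\}$ together with their trivial $\RR^k$ extensions.

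For $\hh_5$ and $n_1$ no such codimension-one abelian ideal exists, so the commuting family must be enlarged. The plan is to first adjoin a derivation Killing $f_{D^*}$: by Lemma~\ref{involutions}(iii)(a) the condition $\{f_{D^*},f_{X^*}\}=0$ reduces to the algebraic requirement $DX=0$, so $D$ may be chosen inside the joint kernel of the already-selected abelian ideal. When this still falls short of $n$ integrals, I will adjoin a quadratic invariant $g_S$ produced from a symmetric $S:\nn\to\nn$ satisfying Proposition~\ref{cuad-2} (for $k=2$) or Proposition~\ref{cuad-3} (for $k=3$). By Lemma~\ref{lema2} such a $g_S$ automatically Poisson-commutes with every $f_{X^*}$, while Lemma~\ref{involutions}(iii)(b) reduces commutation with $f_{D^*}$ to the algebraic condition that $DS$ be skew-symmetric. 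In this way all the required Poisson brackets collapse to finite-dimensional identities in $\nn$ which can be verified directly in each subcase.

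The main obstacle — and the source of the ``except $\nn_{2,3}$'' clause — is the free $3$-step nilpotent Lie algebra on two generators. By Example~\ref{n23}(i) the skew-symmetric derivations of $\nn_{2,3}$ form only a one-parameter family, and the largest abelian subalgebra of $\nn_{2,3}$ itself is likewise too small, so Theorem~\ref{t2} alone cannot produce five commuting functions. Here the argument must genuinely leave the image of the monomorphism $\Psi$ of Theorem~\ref{t2}: I will select a symmetric map $S$ from the explicit family described in Example~\ref{n23}(ii), confirm by Proposition~\ref{cuad-3} that the corresponding $g_S$ is a first integral, and then check through Lemmas~\ref{lema2} and~\ref{involutions} that $g_S$ is in involution with the linear and Killing integrals previously chosen for $\nn_{2,3}$. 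This last verification is the delicate step of the proof, and the unique place where the construction departs from the Killing-vector pattern promised by the moreover clause.
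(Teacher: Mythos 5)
Your proposal is correct and follows essentially the same route as the paper: a case-by-case pass through the classification, using the Killing integrals of a maximal abelian ideal plus the energy where the ideal has codimension one, adjoining a derivation Killing $f_{D^*}$ for $n_1$, and falling back on invariant quadratics $g_S$ for $\hh_5$ and $\nn_{2,3}$ (with the explicit $S$ of Example~\ref{n23}(ii) in the latter case), exactly as the paper does. The only slip is a citation: the involution $\{f_{D^*},f_{X^*}\}=0 \iff DX=0$ follows from Theorem~\ref{t2} (since $\{f_{D^*},f_{X^*}\}=f_{(DX)^*}$), not from Lemma~\ref{involutions}(iii)(a), which concerns the invariant linear function $f_U$ rather than the Killing integral $f_{U^*}$.
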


Note that the integrability of the geodesic flow on the  trivial extension of a given Lie group $N$, that is,  $\RR^k \times N$,  depends on  the integrability of the geodesic flow on $N$. In fact in any case, for every central vector $X$ in the abelian factor, one gets  a linear first integral $f_{X}$ which  commutes with any first integral defined on $TN$ (and extended to $T(\RR^k \times N)$) as proved in Lemma \ref{involutions}. This holds in any dimension.

\begin{proof}


\begin{enumerate}[(i)]
\item Dimension $3$. For the Lie algebra $\hh_3$ with basis $X_1, Y_1, Z$ and non trivial relations $[X_1, Y_1]=Z$, take  the metric that makes of this basis an orthonormal basis. 

Then the set $\{f_{Z}, \En, f_{X_1^*}\}$ is a set of pairwise commutative first integrals for the geodesic flow. In fact both $f_{Z}, \En$ are invariant functions each of them commuting with $f_{X_1^*}$.

\item Dimension $4$. The Lie algebra $n_2$ has a codimension one abelian subalgebra. For instance take $\aa=span\{e_2, e_3, e_4\}$. Thus for any left-invariant metric on the corresponding Lie group, the set of first integrals given by $\{\En, f_{e_2^*}, f_{e_3^*}, f_{e_4^*}\}$ builds a complete set of first integrals for the geodesic flow, that is a direct application of Theorem \ref{t2}.

\item Dimension 5. Let $\hh_5$ denote the Heisenberg Lie algebra of dimension five. This is spanned by the vectors $X_1, X_2, Y_1, Y_2, Z$ satisfying the Lie bracket relations $[X_i,Y_j]=\delta_{ij} Z$. 

Let $S_i:\nn \to \nn$ be the symmetric maps given by $S_i:=Id|_{\vv_i}$,  where $\vv_i$ is the subspace of $\hh_5$ spanned as $\vv_i=span\{X_i,Y_i\}$ for $i=1,2$, and define the invariant maps on $TH_5$ by
$g_{S_i}(p,Y)=\la S_i Y,Y\ra$.  Then the set $\{f_Z,f_{X_1^*}, f_{X_2^*}, g_{S_1}, g_{S_2}\}$ is a complete set of first integrals. This follows as application of Theorem \ref{t2} and Lemma \ref{lema2}.

\item Dimension $5$. Let $n_3$ denote the Lie algebra spanned by vectors $e_1,e_2,e_3,e_4,e_5$ satisfying the non trivial Lie brackets $[e_1, e_2]=e_4$, $[e_1, e_3]= e_5$. Let $N$ denote the corresponding Lie group equipped with a left-invariant metric. Since the ideal of $n_3$ spanned by $e_2,e_3,e_4,e_5$ is abelian, the geodesic flow admits a complete set of first integrals, as proved in Theorem \ref{t2}.

\item Dimension $5$. Let $n_1$ denote Lie algebra  of  Example \ref{n1}. Clearly the commutator is spanned by the vectors $e_3, e_5$ while the center by the vector $e_5$. One has an abelian subalgebra spanned by the vector $e_3, e_4, e_5$. 

Fix the left-invariant metric on the Lie group for which the basis $e_1, e_2, e_3, e_4, e_5$ is orthonormal. Let $D:n_1\to n_1$ be the skew-symmetric derivation such that $De_1=e_2, De_2=-e_1$ and $De_i=0$ for all i=3, 4, 5.

An application of Theorem \ref{t2} and Lemma \ref{involutions} shows that the set given by  $\{\En, f_{e_3^*},$ $ f_{e_4^*}, f_{e_5}, f_{D^*}\}$ is a set of pairwise commuting first integrals. 

\item Dimension $5$.  Let $\nn_{2,3}$ be the free 3-step nilpotent Lie algebra in two generators  of Example \ref{n23}. Indeed the center is spanned by the vectors $e_4, e_5$ and the commutator by the vectors  $e_3, e_4, e_5$. 

Take the left-invariant metric on the Lie group for which this a orthonormal basis. Take the symmetric map on $\nn_{2,3}$ given by 
$$Se_1=e_5,\quad Se_2=-e_4, \quad Se_3=e_3, \quad S e_4=-e_2, \quad Se_5=e_1.$$
 This induces a first integral $g_S: TN_{2,3} \to \RR$, in fact $S$ satisfies conditions of Proposition \ref{cuad-3}. Thus the set $\{\En, f_{e_3^*}, f_{e_4}, f_{e_5}, g_S\}$ is a set of pairwise commuting first integral, as it satisfies conditions of Lemma \ref{lema2}, Lemma  \ref{involutions} and Theorem \ref{t2}.

\end{enumerate}
\end{proof}

Next we complete the proof of Liouville integrability. We also show examples of Liouville integrability of the geodesic flow on some  compact quotients, when they are  equipped with the  induced metric, by asking the projection map is a local isometry.

\smallskip

\begin{itemize}
	\item On the Heisenberg Lie group $H_3$ endowed with the canonical metric as in Example \ref{heis}, a complete set of first integrals on $TH_3$ corresponds to the set 
	$\En, f_{X_1^*}, f_Z$, where
	$f_{X_1^*}(p,Y)=\la X_1, Y\ra + \la W, Y_1\ra \la Z, Y\ra$, with $\exp(W)=p\in H_3$.
	
	In fact the gradient vector fields at $p=(x_1,y_1,z)$ are given by
	$$\grad f_Z(p,Y)=(0,Z), \, \grad\En(p,Y)=(0,Y), \, \grad f_{X_1^*}(p,Y)=(\la Z, Y\ra Y_1, X_1+ y_1 Z).$$
	The set $\{f_{Z}, \En, f_{X_1^*}\}$ is linear independent on the dense set of $TH_3$ defined by $\{(p,Y): \la Y,Z\ra \neq 0 \mbox{ and } \la Y,X_1\ra \neq 0 \mbox { or } \la Y, Y_1 \ra \neq 0\}$, which proves that the geodesic flow is Liouville integrable. 
	
	On the Heisenberg Lie group $H_5$ equipped with the canonical metric as in Example \ref{heis}, one has the  first integrals $f_Z, f_{X_1^*}, f_{X_2^*}$ which pairwise commute. Let $S_i:\nn \to \nn$ be the symmetric map given by $S_i:=Id|_{\vv_i}$ where $\vv_i=span\{X_i,Y_i\}$ for $i=1,2$, and define the invariant maps on $TH_5$ by
	$g_{S_i}(p,Y)=\la S_i Y,Y\ra$.  Then the set $\{f_Z,f_{X_1^*}, f_{X_2^*}, g_{S_1}, g_{S_2}\}$ is a set of commuting first integrals which are linear independent (see more details in \cite{KOR}). 
	
	A lattice in $H_3$ or $H_5$ can be constructed as
	$$\Gamma_r= r\ZZ \times \ZZ \times \ZZ, \quad  \Gamma_{r_1,r_2}=r_1\ZZ \times r_2 \ZZ \times \ZZ \times \ZZ \times \ZZ, \, \mbox{ where} \, r, r_1, r_2\in \NN,\, r_1|r_2.$$
	
	Equipp  the quotient $\Gamma_r \backslash H_3$ with the induced metric for which the projection map is a local isometry. Then any invariant function on $TH_3$ is induced to the quotient. The function $f_{X_1^*}$ induces the following differentiable function on $T(\Gamma_r \backslash H_3)$:
	$$f_1(\Gamma_rp,Y)= e^{-1/f_Z^2} \sin \left(2\pi\frac{f_{X_1^*}}{f_Z}\right)(p,Y),$$
	and similarly one construct the functions $f_1,f_2$ on $T(\Gamma_{r_1,r_2}\backslash H_5)$ induced by $f_{X_1^*}, f_{X_2^*}$. This prove the Liouville integrability, the linear independece also  follows from  properties of the Poisson bracket.
	
	\item Dimension 4. 	For the Lie 3-step nilpotent Lie algebra $n_2$, one has the corresponding Lie group modeled on $\RR^4$ with the usual topology and with  the multiplication map  given by
	$$\begin{array}{rcl}
	(x_1,x_2,x_3,x_4)(x_1',x_2',x_3', x_4') & = & (x_1+x_1',x_2+x_2',x_3+x_3'+\frac12(x_1x_2'-x_2x_1'),\\
	& & \quad x_4+x_4'+\frac12(x_1x_3'-x_1'x_3)+\frac{x_1}{12}(x_1x_2'-x_2 x_1')).
	\end{array}$$ 
	It is not hard to see that the next vector fields are  Killing vectors: 
	$$\begin{array}{rcl}
	e_2^*(x_1,x_2,x_3,x_4) & = & e_2 -x_1 e_3 + \frac5{12} x_1^2 e_4,\\
	e_3^*(x_1,x_2,x_3,x_4) & = & e_3 - x_1 e_4,
	\end{array}
	$$
	where $e_1,e_2,e_3,e_4$ denote the basis of orthonormal left-invariant vector fields. 
	
	We have the set of first integrals $\En,f_{e_4}, f_{e_2^*}, f_{e_3^*}$ with corresponding gradients at $(p,Y)$, with $p=(x_1,x_2,x_3,x_4)$:
	
	$\grad \En (p,Y)=(0,Y),\qquad \grad f_{e_2^*}(p,Y)=((-\la Y, e_3\ra + x_1 \la Y,e_4\ra) e_1, e_2^*),$
	
	$\grad f_{e_4^*}(p,Y)=(0,e_4),\qquad \grad f_{e_3^*}(p,Y)=( - \la Y,e_4\ra e_1, e_3^*).$
	
	From this usual computations show that this set is linear independent whenever $\la Y, e_1\ra \neq 0$.

\item The Lie algebra $n_3$ of dimension five is 2-step nilpotent, where the vectors $e_2, e_3, e_4, e_5$ span an  abelian subalgebra. For any left-invariant metric on $TN_3$ one gets  the first integrals $f_{e_i^*}$ which are pairwise in involution.

The corresponding simply connected Lie group can be modeled on $\RR^5$ with the multiplication map given by
$$
(v,x_4,x_5)(w,x_4',x_5')  = (v+w, x_4+x_4'+\frac12(x_1x_2'-x_2x_1'), x_5+x_5'+\frac12(x_1x_3'-x_3x_1')),
$$
where $v=(x_1,x_2,x_3), \, w=(x_1',x_2', x_3')$. One has the right-invariant vector fields at every point $p=(x_1,x_2,x_3,x_4,x_5)\in N_3$:
$$e_2^*(p)=e_2(p)-x_1e_4(p), \qquad e_3^*(p)=e_3(p)-x_1e_5(p).$$
 Let $\la \,,\ra$ denote the metric for which the vectors $e_1,e_2,e_3,e_4,e_5$ build a orthonormal basis; this induces on $N_3$ a left-invariant metric, which on $\RR^5$ is given in canonical coordinates as
 
 \smallskip
 
 $g=(1+\frac14(x_2^2+x_3^2))dx_1^2 +(1+\frac14 x_1^2) (dx_2^2 +dx_3^2) +dx_4^2 +dx_5^2-\frac14 x_1 x_2 dx_1dx_2  $
 
 \smallskip
 
 $\qquad -\frac14 x_1 x_3 dx_1dx_3 +
 \frac12 x_2 dx_1 dx_4 + \frac12 x_3 dx_1 dx_5 -\frac12 x_1 (dx_2 dx_4 +dx_3 dx_5). 
 $
 
 \smallskip
 
  We have the first integrals $\En,f_{e_4}, f_{e_5}, f_{e_2^*}, f_{e_3^*}$ with corresponding gradient vector fields  given by:

$\grad \En(p,Y)=(0,Y), \quad \grad f_{e_4}(p,Y)=(0,e_4),\quad \grad f_{e_5}(p,Y)=(0,e_5),$

$\grad f_{e_2^*}(p,Y)=(\la Y, e_4\ra e_1,e_2^*), \qquad \grad f_{e_3^*}(p,Y)=(\la Y, e_5\ra e_1,e_3^*)$. 
	
	It is not hard to see that if $\la Y,e_1\ra\neq 0$, the set $\En,f_{e_4},f_{e_5},f_{e_2^*},f_{e_3^*}$ is linear independent proving the Liouville integrability of the geodesic flow. 
	
	The Lie group $N_3$ admits at least a cocompact  lattice. In fact, assume $r\in \NN$, $r\geq 2$ and take $\Lambda_r= r\ZZ \times \ZZ \times \ZZ \times \ZZ \times \ZZ \subset N_3$, which is clearly a discrete subgroup such that the quotient $\Lambda_s \backslash N_3$ is a compact space.  Take the metric on $\Lambda_{r}\backslash N_3$ for which the canonical projection $\pi:N_3 \to \Lambda_{r}\backslash N_3 $ is a local isometry. 
	
	The geodesic flow on $T(\Lambda_{r}\backslash N_3)$ is Liouville integrable since all first integrals we had on $N_3$ can be induced to the quotients. In fact, the invariant functions $\En,f_{e_4},f_{e_5}$ are directly induced and take the differentiable functions
	
	$f_2(\Lambda_{r}p,Y)=e^{-1/\la Y,e_4\ra^2} sin\left( 2\pi \frac{f_{e_2^*}}{f_{e_4}}\right)(p,Y)$, 
	
	\smallskip
	
	$f_3(\Lambda_{(r,r_1,r_2,m_1,m_2)}p,Y)=e^{-1/\la Y,e_5\ra^2} sin\left( 2\pi \frac{f_{e_3^*}}{f_{e_5}}\right)(p,Y)$, 
	
	that finishes the proof of the complete integrability in this case. See more details in \cite{Ov1}. 
	
\item Take the 3-step nilpotent Lie algebra $n_1$. The corresponding Lie group can be modeled on $\RR^5$ together with the multiplication map given by
$$
\begin{array}{rcl}
(v,x_3,x_5)(w,x_3',x_5') & = &(v+w, x_3+x_3'+\frac12(x_1x_2'-x_2x_1'), x_5+x_5' +\\
& &  + \frac12(x_1x_3'-x_3x_1'+x_2x_4'-x_4x_2') + \\
& &  +\frac1{12}[x_1(x_1x_2'-x_2x_1')+x_1'(x_1'x_2-x_1x_2')]),
\end{array}
$$	
	where we take the notation $v=(x_1,x_2,x_4), w=(x_1',x_2',x_4')$. By computing one gets the next right-invariant vector fields  at $p=(x_1,x_2,x_3,x_4,x_5)$:
	$$e_3^*(p)=e_3-x_1 e_5, \qquad e_4^*(p)=e_4-x_2e_5,$$ 
	where $e_3,e_4, e_5$ are the left-invariant vector fields, which in usual coordinates  for $p=(x_1,x_2,x_3,x_4,x_5)$ are given by
	$$e_3=\partial_{x_3} +\frac12 x_1\partial_{x_5}, \quad e_4=\partial_{x_4} +\frac12 x_2\partial_{x_5},\qquad  e_5=\partial_{e_5}.$$
		As the metric on $n_1$ is taken such that the basis $e_1, e_2, e_3, e_4, e_5$ is orthonormal, one gets a left-invariant metric on $N_1$.
		
		The skew-symmetric derivation given by $De_1=e_2, De_2=-e_1$ and $De_i=0$ for $i=3,4,5$, gives rise to the Killing vector field at $p=(x_1,x_2,x_3,x_4,x_5)$:
	$$D^*(p)=-x_2 e_1 + x_1 e_2 -\frac12 (x_1^2+x_2^2)e_3+[\frac12(x_1x_4-x_2x_3)+\frac{x_1}6(x_1^2+x_2^2)] e_5,$$
	which induces the first integral defined by $f_{D^*}(p,Y)=\la D^*(p), Y\ra$. This has a gradient vector field $\grad f_{D^*}(p,Y)=(U,V)$, for  $p=(x_1,x_2,x_3,x_4,x_5)$, and  for  $Y=y_1e_1+y_2e_2+y_3e_3+y_4e_4+y_5 e_5$ given by 
	$$\begin{array}{rcl}
	U & = & (y_2-x_1y_3+\frac12(x_1^2+x_2^2)y_5)e_1- (y_1+x_2y_3)e_2-x_2y_5 e_3 + x_1y_5 e_4\\
	V & = & -x_2 e_1 + x_1 e_2-\frac12(x_1^2+x_2^2) e_3 +[\frac12(x_1x_4-x_2x_3)+\frac16 x_1(x_1^2+x_2^2)] e_5.
	\end{array}
	$$
By using this it is not hard to see that the first integrals $\En, f_{e_5}, f_{e_3^*}, f_{e_4^*}, f_{D^*}$ are linear independent if one asks $y_5\neq $ and either $x_1\neq 0$ or $x_2\neq 0$, conditions that shape a dense set on $TN_1$.

\item For the free 3-step nilpotent Lie algebra $\nn_{2,3}$, with orthonormal basis $e_1, e_2, e_3, e_4, e_5$ we model the corresponding Lie group on $\RR^5$ with the multiplication given by
$$\begin{array}{rcl}
(x_1,x_2,x_3,x_4,x_5)(x_1',x_2',x_3',x_4',x_5') & = &(x_1+x_1',x_2+x_2', x_3+x_3'+\frac12(x_1x_2'-x_2x_1'),\\
& & x_4+x_4'+\frac12(x_1x_3'-x_3x_1')+\frac{x_1}{12}(x_1x_2'-x_2x_1'),\\
& &  x_5+x_5'+\frac12(x_2x_3'-x_3x_2')+\frac{x_2}{12}(x_1x_2'-x_2x_1')).
\end{array}
$$	
We have the invariant functions $\En, f_{e_4}, f_{e_5}, g_S$. The set $\{\En, f_{e_3^*}, f_{e_4}, f_{e_5}, g_S\}$ is a set of pairwise commuting first integral, as it satisfies conditions of Lemma \ref{lema2}, Lemma  \ref{involutions} and Theorem \ref{t2}.

For the invariant functions we have the corresponding gradient vector fields:
$$\begin{array}{rcllcl}
\grad \En(p,Y) & = & (0,Y), \qquad  \grad g_S (p,Y) & = &(0,SY)\\
\grad f_{e_4} (p,Y) & = & (0,e_4), \qquad \grad f_{e_5} (p,Y) & = & (0,e_5),
\end{array}
$$
while for the function $f_{e_3^*}(p,Y)=\la e_3-x_1e_4- x_2 e_5,Y\ra$ for $p=(x_1,x_2,x_3,x_4,x_5)$, the corresponding gradient vector field is at $(p,Y)$ with $Y=\sum_{i=1}^5 y_i e_i$,
$$\grad f_{e_3^*}(p,Y)=(-y_4 e_1 - y_5 e_2, e_3 -x_1 e_4 -x_2 e_5).$$

This set  of first integrals is linear independent on the dense set given by $\{(p,Y): \la Y, e_4\ra \neq 0\, \mbox{ and } \la Y, e_2 \ra \la Y, e_5\ra + \la e_1, Y\ra \la Y, e_4 \ra \neq 0 \}$, which proves the Liouville integrability.
\end{itemize}

\subsection{Dimension six} Now we study the situation in dimension six.  With the  notation of de Graaf \cite{Gra} the list of k-step nilpotent Lie algebras of dimension six, with k=2,3, is given by:

 \begin{enumerate}
\item The trivial extensions of low-dimensional Lie algebras. 
\item $n_{6,10}$: $[e_1,e_2]=e_3, \quad [e_1, e_3]=e_6, \quad [e_4, e_5]=e_6$.
\item $n_{6,19 (\varepsilon)}$: $[e_1, e_2] = e_4,\quad [e_1, e_3] = e_5,\quad [e_2, e_4] = e_6, \quad [e_3, e_5] = \varepsilon e_6.$
\item $n_{6,20}$: $[e_1,e_2]=e_4, \quad [e_1, e_3]=e_5, \quad [e_1, e_5]=e_6=[e_2, e_4]$.
\item  $n_{6,22(\varepsilon)}:$ $[e_1, e_2] = e_5,\quad [e_1, e_3] = e_6, \quad [e_2, e_4] = \varepsilon
e_6,\quad [e_3, e_4] = e_5$.
\item $n_{6,23}:$ $[e_1, e_2] = e_3,\quad [e_1, e_3] = e_5,\quad [e_1, e_4] = e_6, \quad [e_2, e_4] =  e_5.$
\item $n_{6,24(\varepsilon)}:$ $[e_1, e_2] = e_3,\quad [e_1, e_3] = e_5, \quad [e_1, e_4] = \varepsilon
e_6,\quad [e_2, e_3] = e_6,\quad [e_2, e_4] = e_5$. 
\item $n_{6,25}:$ $[e_1, e_2] = e_3,\quad [e_1, e_3] = e_5,\quad [e_1, e_4] = e_6.$
\item $n_{6,26}:$ $[e_1, e_2] = e_4,\quad [e_1, e_3] = e_5, [e_2, e_3] = e_6$.
\end{enumerate}

On the corresponding Lie group fix the left-invariant metric, which at the Lie algebra corresponds to the orthonormal basis $e_1, e_2, e_3, e_4, e_5, e_6$. Next we shall construct a complete set of first integrals in involution by applying the previous results, if possible. 

\begin{enumerate}[(i)]

\item $n_{6,10}$: This is a 3-step nilpotent Lie algebra such  that the Lie subalgebra spanned by $e_2, e_3, e_5, e_6$ denoted by $\aa$ is abelian.  The first integrals  $f_{e_i^*}$ for $i=2,3,5,6$ is a set of pairwise commuting first integrals. 

The skew-symmetric derivation $D$ of $n_{6,10}$  defined on the basis $e_1, e_2, e_3, e_4, e_5, e_6$ by 
$$De_1=e_4,\quad De_4=-e_1, \quad De_i=0 \mbox{ for } i\neq 1,4,$$ gives rise to the first integral $f_{D^*}$ that Poisson commutes with the first integrals $f_{e_i^*}$ for $i=2,3,5,6$. The set 
$$\{\En, f_{D^*}, f_{e_i^*}\}\quad \mbox{ for } i=2,3,5,6,$$ is a complete set of first integrals. 

\item $n_{6,19}(\varepsilon)$  In this case any symmetric map satisfying Equation  \eqref{eqcuad} in the orthonormal basis $e_1, e_2, \hdots, e_6$ has the form
$$ \bullet \,  \varepsilon =0 \, \left( 
\begin{matrix}
\alpha & 0  & 0 & 0  & 0 & 0\\
0 & \alpha & 0 & 0 & 0  & 0 \\
0 & 0 & \alpha & 0 & 0 & 0\\
0 & 0 & 0 & \alpha & 0 & 0 \\
0 & 0 & 0 & 0 & s_1 & s_2\\
0 & 0 & 0 & 0 & s_2 & s_3
\end{matrix}
\right), \qquad  \bullet \,  \varepsilon \neq 0 \, \left( 
\begin{matrix}
\alpha & 0  & 0 & 0  & 0 &\varepsilon  \gamma\\
0 & \alpha & 0 & 0 & 0  & 0 \\
0 & 0 & \alpha & 0 & 0 & 0\\
0 & 0 & 0 & \alpha + \varepsilon  \gamma & 0 & 0 \\
0 & 0 & 0 & 0 & \gamma  & 0\\
\varepsilon  \gamma & 0 & 0 & 0 & 0 & \delta
\end{matrix}
\right),
$$
where all variables are assumed to be real numbers. 

For $\varepsilon \neq 0$ the set of first integrals given by
$\{\En, f_{e_i^*}, S\}$ for $i=3,4,5,6$ and $S:=S_1$ the symmetric map with $\gamma=1$ shows a  complete set of  functions in involution. 

For $\varepsilon = 0$, let $D$ denote the skew-symmetric derivation defined by $De_1=e_2$, $De_2=-e_1$ and $De_i=0$ for $i\neq 1,2$. The set of first integrals given by
$\{\En, f_{e_i^*}, f_{D^*}\}$ for $i=3,4,5,6$  shows a maximal set of pairwise commuting first integrals. 

\item $n_{6,20}$.  Let $D$ be the skew-symmetric derivation given  by
$De_1=e_2,$, $ De_2=-e_1$, $De_i=0$ for $i=3,4,5,6$. As application of Theorem \ref{t2} the next set  is a complete set of first integrals. 
$$\{\En, f_{D^*}, f_{e_i^*}\}\quad  i=3,4,5,6.$$

\item $n_{6,22}(\varepsilon)$. This is a 2-step nilpotent Lie algebra, which is almost non-singular. An abelian Lie subalgebra is spanned by the vectors $e_1, e_4, e_5, e_6$ and one has the invariant first integral constructed as in Equation \eqref{gi}:
$$g(p,V+Z)=\la V, j(Z)^2 V\ra, \qquad \mbox{ for all } V+Z \in \vv\oplus \zz, \mbox{ with } \vv=\zz^{\perp}.$$
Note that $\zz=span\{e_5, e_6\}$ and $\vv=span\{e_1, e_2, e_3, e_4\}$. Therefore the map $g$ is a polynomial of degree four explicitly given by
$$g(p,V+Z)=-(z_5^2+z_6^2)[v_1^2+v_4^2+(1+\varepsilon)(v_2^2+v_3^2)] + 2 (1 + \varepsilon) z_5 z_6 [v_1v_4-v_2 v_3].$$

The involution of the set of first integrals $\{\En, g, f_{e_1^*},  f_{e_4^*}, f_{e_5}, f_{e_6}\}$ follows from Theorem \ref{t2} and Lemma \ref{lema2}. 

\item $n_{6,23}$. In this case any skew-symmetric derivation is trivial and any symmetric map on the Lie algebra has the form:
$$S e_i =\alpha e_i, \quad i=1,2,4, \quad Se_3= \beta e_3, \quad Se_5= s_{11} e_5 + s_{12} e_6, S e_6= s_{12} e_5 + s_{22} e_6, 
$$ 
where all constrains are real numbers. 

Denote by $S_1, S_2$ corresponding symmetric maps on the Lie algebra with $\alpha_i, \beta_i$ i=1, 2 (and the block for the center free of conditions). One gets that the first integral $g_{S_1}$ commutes with $g_{S_2}$ if and only if $\alpha_1 \beta_2 -\alpha_2 \beta_1 =0$. But since the energy function belongs to this set, one should find another symmetric map. Writing $\alpha_1=1=\beta_1$ one gets that $\alpha_2=\beta_2$ and in this case, the symmetric map $S_2$ results a multiple of the identity map.

Therefore in this case we cannot have a complete set of  first integrals by applying the methods of the previous sections.

\item $n_{6,24}(\varepsilon)$.  This is a 3-step nilpotent Lie algebra with abelian subalgebra $\aa$ spanned by the vectors $e_3,e_4,e_5,e_6$, which give rise to the commuting set of first integrals $f_{e_i^*}$,$ i=3,4,5,6$. To complete this set we have the energy function and we shall search for another first integral. 

 Let $D$ be a skew-symmetric derivation on this Lie algebra.  

\begin{enumerate}

\item for $\varepsilon \neq 0, 1$, the map $D$ is trivial ($D\equiv 0$). 

\item For $\varepsilon = -1$, any skew-symmetric derivation satisfies $De_1=- \alpha e_2, De_2=  \alpha e_1$ and $De_5=- \alpha e_6, De_6=  \alpha e_5$, and $De_i=0$ for i=3,4, for $\alpha \in \RR$. 

\item  For $\varepsilon = 0$,  any skew-symmetric derivation satisfies $De_2=- \alpha e_4, De_4=  \alpha e_2$ for $\alpha \in \RR$ and $De_i=0$, for $i=1,3,5,6$.
\end{enumerate}
These computations show that from a skew-symmetric derivation $D$ we cannot have a first integral $f_{D^*}$ that Poisson commute with the first integrals $f_{e_i^*}$, for all $i=3,4,5,6$.

A symmetric map $S:\nn \to \nn$ on this Lie algebra is as follows
\begin{enumerate}
\item For $\varepsilon \neq 0$ $S e_i=\alpha e_i$, for $i=1,2,3,4$, and for the subspace spanned by $e_5, e_6$ one has $S e_5= s_1 e_5 + s_2 e_6$, $Se_6=s_2 e_5 + s_3 e_6$; where $\alpha, s_1, s_2, s_3\in \RR$. Let  $f_{e_5}, f_{e_6}$ the linear first integrals, and let $f_T:=\En -f_{e_5}^2 -f_{e_6}^2$, denote the invariant first integral. Usual computations show that any quadratic first integral, which has no coeficients on the central coordinates, is linear dependent with $f_T$.  

 \item For $\varepsilon=0$, any symmetric map $S$ giving rise to a first integral $g_S$ has the form $S e_i=\alpha e_i$, for $i=1,2,3$, $Se_4=  \alpha e_4$ and $S e_5= s_1 e_5 + s_2 e_6$, $Se_6= s_2 e_5 + s_3 e_6$; where $\alpha, s_1, s_2, s_3\in \RR$. 

And we are not able to get a complete set of first integrals, except for $\varepsilon=-1$, 
\end{enumerate}

\item $n_{6,25}$ has a  abelian Lie subalgebra spanned by $e_2, e_3, e_4, e_5, e_6$ and a complete set of commuting first integrals is induced by the corresponding functions $f_{e_i^*}$, i=2,3,4,5,6 as in Theorem \ref{t2}. 

\item $n_{6,26}$. This is the free 2-step nilpotent Lie algebra in three generators, which was studied in \cite{Ov1}. A complete set is constructed with a symmetric map $S$, derived from the quadratic
$$q(Y)=2(y_1y_6-y_2 y_5+ y_3 y_4), \qquad \mbox{ for } \quad  Y=\sum_{i=1}^6 y_i e_i.$$
One gets a complete set of commuting first integrals  with $\{\En, f_{e_2^*}, f_{e_4}, f_{e_5}, f_{e_6}, g_S\}$, where $g_S(p,Y)=q(Y)$. 
\end{enumerate}

We have proved the next result. 

\begin{prop} With exceptions of the Lie algebras $n_{23}$ and $n_{24}(\varepsilon)$, the rest of $k$-step nilpotent Lie algebras of dimension six, for k=2,3, admits a metric for which there is  a complete set of first integrals. 
\end{prop}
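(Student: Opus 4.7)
The plan is to proceed by case analysis through the de Graaf list (i)--(viii) above. The trivial extensions in item (1) inherit complete sets from Lemma \ref{firstlem} by adjoining the linear integrals $f_X$ for $X$ in the extra abelian factor; since such $X$ lies in the center of $\nn$, Proposition \ref{propcuad}(i) together with Lemmas \ref{lema2} and \ref{involutions}(i) guarantee that these commute with everything already present. For each remaining algebra I would fix the left-invariant metric for which $e_1,\dots,e_6$ is orthonormal and assemble six pairwise Poisson-commuting first integrals from four building blocks: (a) the energy $\En$; (b) Killing integrals $f_{X^*}$ associated to a distinguished abelian ideal, which pairwise commute via \eqref{rbracket}; (c) invariant quadratic integrals $g_S$ from Propositions \ref{cuad-2}--\ref{cuad-3}, or Butler-type polynomials $g_i(p,V+Z)=\la V, j(Z)^{2i}V\ra$; and (d) integrals $f_{D^*}$ arising from skew-symmetric derivations via Theorem \ref{t2}. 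Cross-involution between pieces of different types is read off from Lemma \ref{lema2} (invariant versus Killing) and Lemma \ref{involutions}(iii) (for $f_{D^*}$ versus $f_X$ or $g_S$).

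The first step is to locate a large abelian ideal in each algebra. This alone handles $n_{6,25}$ via $\mathrm{span}\{e_2,\dots,e_6\}$. For $n_{6,10}$, $n_{6,19}(\varepsilon)$ and $n_{6,20}$ one has a four-dimensional abelian ideal giving four commuting $f_{X^*}$, and one extra integral is produced either by a rank-two skew-symmetric derivation rotating $e_1$ with a transverse generator (for $n_{6,10}$, $n_{6,20}$ and $n_{6,19}(0)$) or by an explicit symmetric map $S$ forced by \eqref{eqcuad} (for $n_{6,19}(\varepsilon)$ with $\varepsilon\neq 0$). For the 2-step algebra $n_{6,22}(\varepsilon)$ the Butler polynomial $g(p,V+Z)=\la V, j(Z)^2 V\ra$ of degree four commutes with every $f_{X^*}$ by Lemma \ref{lema2} and completes the set $\{\En, g, f_{e_1^*}, f_{e_4^*}, f_{e_5}, f_{e_6}\}$. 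For the free 2-step $n_{6,26}$ the quadratic $g_S$ attached to the symmetric map built from $2(y_1y_6 - y_2y_5 + y_3y_4)$ satisfies \eqref{eqcuad} by direct check and Poisson commutes with the remaining five integrals via Lemma \ref{involutions}(ii)--(iii).

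The main obstacle, and what accounts for the exceptions in the statement, is showing that $n_{6,23}$ and $n_{6,24}(\varepsilon)$ do \emph{not} admit a complete set via these tools. For both algebras I would compute exhaustively the space of skew-symmetric derivations and the space of symmetric maps $S$ solving \eqref{eqcuad}. In $n_{6,23}$ the derivation space is trivial, and every admissible $S$ acts as a scalar $\alpha$ on $\mathrm{span}\{e_1,e_2,e_4\}$ and as $\beta$ on $e_3$, with a free $2\times 2$ block on the center. The involution condition $\{g_{S_1}, g_{S_2}\} = 0$ reduces to $\alpha_1\beta_2 - \alpha_2\beta_1 = 0$, so once $\En$ (with $\alpha=\beta=1$) has been included, the remaining freedom modulo the central integrals $f_{e_5}, f_{e_6}$ and their products collapses to multiples of the identity, yielding no sixth independent commuting integral. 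An analogous computation treats $n_{6,24}(\varepsilon)$ for generic $\varepsilon$, leaving only isolated exceptional values (such as $\varepsilon=-1$) where extra skew-symmetric derivations reappear and which one would handle separately. Collecting the positive constructions from the other seven families then yields the proposition.
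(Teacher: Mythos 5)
Your proposal follows essentially the same route as the paper: a case-by-case construction through de Graaf's list using the energy, Killing integrals $f_{X^*}$ attached to an abelian subalgebra, invariant quadratics $g_S$ or Butler-type polynomials, and derivation integrals $f_{D^*}$, with the same two exceptional algebras singled out by the same exhaustive computation of skew-symmetric derivations and admissible symmetric maps. The only point worth noting is that for $n_{6,19}(\varepsilon)$ with $\varepsilon\neq 0$ the four commuting Killing integrals must come from the abelian ideal $\mathrm{span}\{e_1,e_4,e_5,e_6\}$ rather than $\mathrm{span}\{e_3,e_4,e_5,e_6\}$ (since $[e_3,e_5]=\varepsilon e_6$); your plan, which only asks for \emph{some} four-dimensional abelian ideal, accommodates this, whereas the paper's explicit choice does not.
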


\begin{rem} The other nilpotent Lie algebras in dimension six from the list of de Graaf, are nilpotent of step four or five. In some cases the Lie algebra has a codimension one abelian subalgebra and by Theorem \ref{t2} one gets a complete set of commuting first integrals for any left-invariant metric. 
\end{rem}

\begin{rem} The Lie algebra $n_{6,26}$ is the free 2-step nilpotent Lie algebra in three generators. Butler proved that for any left-invariant metric and any lattice $\Gamma$, the geodesic flow cannot be  Liouville integrable on $T(\Gamma \backslash N_{6,26})$ (see \cite{Bu1}). 
\end{rem}

\smallskip


\begin{thebibliography}{GGGG}



\bibitem{A-M} {\sc R. Abraham, J. Marsden}, Foundations of Mechanics, 2nd. Edition, AMS Reprinted 2008.

\bibitem{BM} {\sc V. del Barco, A. Moroianu,} {\it Symmetric Killing tensors on nilmanifolds}, ArXiv1811.09187.


\bibitem{BT} {\sc W. Bauer, D. Tarama}, {\it On the complete integrability of the geodesic flow of pseudo-H-type Lie groups}, Anal.  Math. Phys. {\bf 8} (4) (2018), 493--520. 

\bibitem{Bo} {\sc C. Bock}, {\it On low-dimensional solvmanifolds}, Asian J. Math. {\bf 20} (2), (2016) 199--262. 

\bibitem{BoT}{\sc A. V. Bolsinov, I. A. Taimanov}, {\it Integrable geodesic flows with positive topological entropy}, Invent. math. {\bf 140}, (2000) 639--650.

\bibitem{Bu1} {\sc L. Butler}, {\it Integrable geodesic flows with wild first integrals: the case of two-step nilmanifolds}. Ergodic Theory Dynam. Systems {\bf 23} (3), (2003) 771--797.

\bibitem{DM} {\sc S. G. Dani, M. G. Mainkar}, {\it Anosov automorphisms on compact nilmanifolds associated with
graphs}, Trans. Amer. Math. Soc. {\bf 357}, (2005) 2235--2251.

\bibitem{DDM} {\sc R.  Decoste, L. Demeyer, M. Mainkar}, {\it Graphs and metric 2-step nilpotent Lie algebras}, Adv. Geom. {\bf 18} (3), (2018) 265--284 .


\bibitem{Eb1} {\sc P. Eberlein}, {\it Geometry of 2-step nilpotent Lie groups}, Modern Dynamical Systems, Cambridge University Press, (2004) 67--101.

\bibitem{Eb2} {\sc P. Eberlein}, {\it Left invariant geometry of Lie groups}, Cubo 6 (1), (2004) 427--510.


\bibitem{GM} {\sc R. Gornet, M. Mast}, {\it The length spectrum of riemannian two-step nilmanifolds}, Ann. Scient. \'Ecole Norm. Sup. (4) {\bf 33} (2), (2000) 181--209.

\bibitem{Gra} {\sc W. A. de Graaf}, {\it Classification of 6-dimensional nilpotent Lie algebras
over fields of characteristic not 2}, J.  Algebra {\bf 309},  (2007) 640--653. 

\bibitem{HMS} {\sc K. Heil, A. Moroianu, and U. Semmelmann}, {\it Killing and conformal Killing tensors}, J. Geom. Phys. {\bf 106}, (2016) 383--400.


\bibitem{He} {\sc S. Helgasson}, {Differential Geometry, Lie groups and Symmetric Spaces}, Graduate studies in Math. col. {\bf 34}, AMS (2001).
 
\bibitem{KOR} {\sc A. Kocsard, G. Ovando, S. Reggiani}, {\it On first integrals of the geodesic flow on Heisenberg nilmanifolds},  Diff. Geom. Appl. {\bf 49}, (2016) 496--509. 


\bibitem{Ko2}{\sc B. Kostant}, {\it The solution to a generalized 
Toda lattice and representation theory}, Advances in Math. {\bf 39}, (1979) 195--338.

\bibitem{Koz} {\sc V.V. Kozlov}, {\it Topological obstructions to the integrability of natural mechanical systems}, Dokl. Akad. Nauk SSSR {\bf 249} (6), (1979) 1299--1302.


\bibitem{LW} {\sc J. Lauret, C. Will}, {\it Nilmanifolds of dimension $\leq$ 8
admitting Anosov diffeomorphisms}, Trans. Am. Math. Soc. {\bf 361}(5), (2009) 2377--2395. 

\bibitem{Mi} {\sc J. Milnor}, {\it Curvature of left invariant metrics on Lie groups}, Advances in Math. {\bf 21} (3), (1976) 293--329.


\bibitem{Ov1} {\sc G. Ovando}, {\it The geodesic flow on nilmanifolds associated to graphs}. Accepted in Rev. UMA (2019) (see preprint in arXiv:1708.09457).


\bibitem{Py} {\sc T. L. Payne}, {\it Anosov automorphisms of nilpotent Lie algebras}, J. Mod. Dyn. {\bf 3} (1),  (2009) 121--158. 

\bibitem{Ra} {\sc M. Raghunathan}, {Discrete subgroups of Lie groups},
Springer, New York (1972).

\bibitem{Sc} {\sc D. Schueth}, {\it Integrability of geodesic flows and isospectrality
of Riemannian manifolds},  Math. Z. {\bf 260}, (2008) 595--613.

\bibitem{Se} {\sc U. Semmelmann}, {\it Conformal Killing forms on Riemannian manifolds}, Math. Z. {\bf  245}, (2013) 503--527.

\bibitem{Sy}{\sc W. Symes}, {\it Systems of Toda type, inverse
spectral problems and representation theory}, Invent. Math. {\bf 59}, (1978) 13--53.

\bibitem{Ta1} {\sc I.A. Taimanov}, {\it  Topological obstructions to integrability of geodesic flows on non-simply-connected manifolds}, Izv. Akad. Nauk SSSR, Ser. Mat. {\bf 51} (2), (1987) 429–435.

\bibitem{Ta2} {\sc I.A. Taimanov}, {\it Topology of Riemannian manifolds with integrable geodesic flows}, Tr. Mat. Inst. Steklova {\bf 205}, (1994) 150–163.

\bibitem{Th} {\sc A. Thimm}, {\it  Integrable geodesic flows on homogeneous spaces}, Ergod. Th.  \& Dynam. Sys. {\bf 1}, (1981) 495--517.

 \bibitem{Va} {\sc V. Varadarajan}, {  Lie groups, Lie algebras and their representations}, Springer, (1984).

\bibitem{Wi} {\sc E. Wilson},  {\it Isometry groups on homogeneous nilmanifolds}, Geom. Dedicata {\bf 12} (2), (1982) 337--345.

\bibitem{Wo} {\sc J. Wolf}, {\it  On Locally Symmetric Spaces of Non-negative Curvature and certain other Locally Homogeneous Spaces}, Comment. Math. Helv. {\bf 37}, (1962–1963) 266--295.

\end{thebibliography}
\end{document}